%ATTILA
\documentclass[12pt,twoside]{article}
%\hoffset-1.3cm
%\leftmargin1.5cm
%\voffset-2.2cm
%\oddsidemargin0.9truecm
%\evensidemargin0.9truecm
%\textheight 220mm
%\textwidth 165mm

\usepackage{amsmath,amssymb,amsthm}
\usepackage{multirow,graphicx,color,subcaption,wrapfig,multicol}
\usepackage{graphicx,color,subcaption}

%HONDA
%\documentclass[12pt, reqno]{amsart}
%\usepackage{amsmath, amsthm, amscd, amsfonts, amssymb, graphicx, color}
%\usepackage[bookmarksnumbered, colorlinks, plainpages]{hyperref}

\textheight 22.5truecm \textwidth 14.5truecm
\setlength{\oddsidemargin}{0.35in}\setlength{\evensidemargin}{0.35in}

\setlength{\topmargin}{-.5cm}

\newtheorem{theorem}{Theorem}[section]

\theoremstyle{definition}
\newtheorem{definition}[theorem]{Definition}

\theoremstyle{remark}

\numberwithin{equation}{section}

\theoremstyle{plain}
\newtheorem{thm}{Theorem}[section]
\newtheorem{prop}{Proposition}[section]
\newtheorem{lem}{Lemma}[section]

\theoremstyle{remark}
\newtheorem{rem}{Remark}[section]
\newtheorem{exmp}{Example}[section]

\def\P{{\mathbb {P}}}                       
\def\E{{\mathbb {E}}}

\def\I{{\mathbb {I}}}

\def\PD{{\mathcal P}}

\def\VD{{\mathcal V}} 
\def\PD{{\mathcal P}} 
\def\WD{{\mathcal W}}

\def\00{{\boldsymbol {0}}}
\def\11{{\boldsymbol {1}}}

\begin{document}
\setcounter{page}{1}

\pagestyle{myheadings}
\markboth{ B. Fazekas, I. Fazekas}{Convergence of sequences of ordered selections}

\centerline{\large{\bf {Convergence of sequences of ordered selections}}}

\bigskip

\centerline{ {B. Fazekas, I. Fazekas}}

\bigskip
University of Debrecen,  P.O. Box 400, 4002 Debrecen, Hungary

\medskip \bigskip

\begin{abstract}
In this paper, we introduce a convergence notion for ordered selections.
Our convergence notion is based on subpermutation densities and convergences of the marginal distributions.
A particular case of this convergence is the well-known convergence of permutation sequences.
We also introduce a family of probability measures called generalized permutons.
We show that in the family of generalized permutons several convergence notions are equivalent.
We embed the set of ordered selections to the set of generalized permutons.
We prove that any convergent sequence of ordered selections has a limit which is a generalized permuton.
Moreover, any generalized permuton is the limit of a sequence of ordered selections.
Our results are generalizations of well-known theorems on convergence of permutation sequences to permutons.
\end{abstract}

\renewcommand{\thefootnote}{}
{\footnotetext{
{\bf E-mail:} fazekas.istvan@inf.unideb.hu}}

{\bf Key words and phrases:} permutation, permuton, copula, convergence of probability measures

{\bf Mathematics Subject Classification:} 
05A05, %(1973–now)Permutations, words, matrices
05D40,%(2000–now)Probabilistic methods in extremal combinatorics, 
%including polynomial methods (combinatorial Nullstellensatz, etc.)
60C05,%(1973–now)Combinatorial probability
60F05,%(1973–now)Central limit and other weak theorems

\section{Introduction} \label{introduction}
\setcounter{equation}{0}
In \cite{Hoppen}, the limiting behaviour of sequences of permutations was studied.
If a sequence of permutations $\sigma_{k} =(\sigma_k(1), \sigma_k(2), \dots , \sigma_k(k))$
is convergent in the sense of permutation densities as $k\to \infty$, 
then it has a limit object.
The limit object is a probability measure on the unit square such that both marginals are uniform 
distributions on the unit interval.
In \cite{Hoppen}, the limit object is called limit permutation, but its usual name is copula
(see \cite{Grubel}) and nowadays it is often called permuton (see \cite{Glebov}).

In \cite{Hoppen}, the study of limits of permutation sequences was inspired by results on limits of graph sequences,
see e.g. \cite{LoSze}.
Since the publication of the groundbreaking paper \cite{Hoppen},
a rich theory of asymptotic behaviour of permutation sequences has been developed, see 
\cite{Bassino}, \cite{Alon} and the references therein.

In this paper, we extend the results of \cite{Hoppen} to ordered selections.
We select $m$ elements out of the set $\{1,2,\dots , n\}$ without replacements and 
assume that the order of the selection matters.
Denote the selected numbers by $\nu= \nu_{n,m} =(\nu(1), \nu(2), \dots , \nu(m))$.
We call $\nu_{n,m}$ an $(n,m)$-permutation.
We shall study the limiting behaviour of $\nu_{n,m}$ as $n,m \to \infty$ so that 
$\frac{m}{n}  \to \lambda \in (0,1]$.
In \cite{Hoppen}, it was shown that the permutation densities determine the convergence of a
sequence of usual permutations.  
However, we can see that the permutation densities alone do not describe the asymptotic stability
of a sequence of $(n,m)$-permutations.
To this end, we need also the distribution of the selected `places' in
the sequence $\{1,2,\dots , n\}$.
Using together the permutation densities and the distribution of the selected `places',
we can introduce a convergence notion which we denote by $\nu_{n,m} \xrightarrow[]{t} $.
Our goal is to describe the limit object.
Because $n \to \infty$, the limit object is outside of the set of permutations.
Following the ideas of \cite{Hoppen}, we can identify the limit object $\mu$ as a 
probability measure which we call $\lambda$-permuton.

The probability measure $\mu$ is called a  $\lambda$-permuton, 
if for a certain $\lambda$, $0<\lambda\le 1$,
$\mu$ is concentrated on $[0, \lambda] \times [0,1]$,
its first marginal distribution function is uniform on $[0, \lambda]$, 
and the second marginal distribution function is continuous and its slope is at most $\lambda$.
For a fixed $\lambda$, let $\WD_{\lambda}$ denote the set of all $\lambda$-permutons.
Let $\WD = \cup_{\lambda\in (0,1]} \WD_{\lambda}$.
The elements of $\WD$ are called generalized permutons.

In the set $\WD$ of generalized permutons, we introduce the notion of convergence in the sense of 
permutation densities which we denote by $\mu_n \xrightarrow[]{t} \mu$.
We show that for generalized permutons, this convergence is equivalent to the usual convergence of
 probability measures and as well as to the convergence in the sense of rectangular distance 
 and also in the sense of $L_\infty$ distance of the distribution functions.

The  $(n,m)$-permutations can be embedded into the space of generalized permutons.
In this way, we can prove that 
any convergent  sequence of $(n,m)$-permutations has a limit, which is a generalized permuton. 
Without going into the details of the precise definitions, we can present our main result.

\begin{thm}[\bf{Main theorem}]  \label{main}
Let $\nu_{n}=\nu_{n,m}$ be a convergent sequence of $(n,m)$-permutations: 
$\nu_{n} \xrightarrow[]{t} $.
Let $\mu_{n}$ be the generalized permuton corresponding to $\nu_{n}$, $n=1,2, \dots$.
Then the sequence $\mu_n$ is convergent, i.e. there exists a generalized permuton 
$\mu$ such that $\mu_{n} \xrightarrow[]{t} \mu$.
\end{thm}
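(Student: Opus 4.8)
The plan is to follow the classical scheme of Hoppen et al.: use compactness of the space of probability measures on the closed unit square to extract a weakly convergent subsequence of $(\mu_n)$, check that its limit is a generalized permuton, and then invoke the equivalence results for $\WD$ established above — together with the defining properties of the embedding $\nu\mapsto\mu$ — to promote this to convergence of the whole sequence in the sense of $\xrightarrow[]{t}$. Throughout, write $\lambda_n=m/n$, so that $\lambda_n\to\lambda$.

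First I would collect the elementary properties of the measures $\mu_n$. Each $\mu_n$ is a $\lambda_n$-permuton, hence a probability measure carried by $[0,\lambda_n]\times[0,1]\subseteq[0,1]^2$; its first marginal is the uniform distribution on $[0,\lambda_n]$; and, by the construction of the embedding, its second marginal distribution function $G_n$ is equicontinuous in $n$ — indeed Lipschitz with a common constant, since the selected places form a subset of $\{1,\dots,n\}$ and so no interval of length $h$ contains more than $hn+1$ of them. Since $\mathcal{P}\big([0,1]^2\big)$ is compact and metrizable in the weak topology, the sequence $(\mu_n)$ has a subsequence $\mu_{n_k}$ converging weakly to some probability measure $\mu$ on $[0,1]^2$.

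Next I would verify that $\mu\in\WD$. For the support, fix $\varepsilon>0$: for all large $n$ we have $\lambda_n<\lambda+\varepsilon$, so $\mu_n$ is carried by the closed set $[0,\lambda+\varepsilon]\times[0,1]$; by the portmanteau theorem $\mu\big([0,\lambda+\varepsilon]\times[0,1]\big)=1$, and letting $\varepsilon\downarrow0$ yields $\mu\big([0,\lambda]\times[0,1]\big)=1$. Since marginals are continuous under weak convergence, the first marginal of $\mu$ is the weak limit of the uniform distributions on $[0,\lambda_{n_k}]$, i.e. the uniform distribution on $[0,\lambda]$; and the second marginal of $\mu$ is the limit of the equi-Lipschitz functions $G_{n_k}$, hence itself Lipschitz (in particular continuous) with the slope bound required in the definition of a $\lambda$-permuton. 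Thus $\mu$ is a generalized permuton.

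It remains to identify the subpermutation densities of $\mu$ and conclude. Since $\nu_n\xrightarrow[]{t}$, for every subpermutation $\tau$ the density $t(\tau,\nu_n)$ converges, say to $t_\tau$. By the construction of the embedding, $t(\tau,\mu_n)-t(\tau,\nu_n)\to0$ (the analytic density of the embedded object differs from the combinatorial one by a vanishing error, exactly as for ordinary permutations), so $t(\tau,\mu_n)\to t_\tau$ along the full sequence. On the other hand, by the equivalence of weak convergence with convergence of subpermutation densities on $\WD$, $t(\tau,\mu_{n_k})\to t(\tau,\mu)$, whence $t(\tau,\mu)=t_\tau$. Combining, $t(\tau,\mu_n)\to t(\tau,\mu)$ for every $\tau$ along the whole sequence, which is exactly $\mu_n\xrightarrow[]{t}\mu$ with $\mu\in\WD$. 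I expect the only real difficulty to lie at the interface between the two descriptions: establishing, with errors uniform in the aspect ratio $\lambda_n$, that the combinatorial convergence of $\nu_n$ (subpermutation densities together with the distribution of the selected places) is faithfully transported to the analytic quantities attached to $\mu_n$; granting that, together with the earlier equivalence theorem on $\WD$ and the compactness of $\mathcal{P}\big([0,1]^2\big)$, the rest is a routine subsequence argument.
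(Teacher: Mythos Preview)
Your proposal is correct and follows essentially the same route as the paper: extract a weakly convergent subsequence by compactness of $\mathcal{P}([0,1]^2)$, verify that the limit is a generalized permuton, use Lemma~\ref{H3.5} to transfer $t(\tau,\nu_n)\to t_\tau$ to $t(\tau,\mu_n)\to t_\tau$, and then invoke the equivalence Lemma~\ref{H5.3} on $\WD$ to identify $t_\tau=t(\tau,\mu)$ and conclude full convergence. The paper's write-up differs only cosmetically: it routes the last step through Proposition~\ref{HLemma5.1} (uniqueness of a generalized permuton given its marginals and all densities) to argue that every subsequential weak limit is the same $\mu$, hence $\mu_n\Rightarrow\mu$, and then applies Lemma~\ref{H5.3}; one small point you should make explicit is that the marginal part of $\mu_n\xrightarrow[]{t}\mu$ also holds, which follows since the marginals of $\mu_n$ equal those of $\nu_n$ and the latter converge by hypothesis to what you have already identified as the marginals of $\mu$.
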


So we can consider the generalized permuton $\mu$ as the limit of the sequence of the $(n,m)$-permutations,
so we can write $\nu_{n} \xrightarrow[]{t} \mu$.

We can see, that for $\lambda=1$ a $\lambda$-permuton is a usual permuton.
Moreover, for $n=m$, an $(n,m)$-permutation is a usual permutation.
So our result contains the original result of \cite{Hoppen}, Theorem 1.6 (i) as a particular case.
For the proof of our theorem, we use appropriate modifications of the methods of \cite{Hoppen}.

Our next theorem shows that any generalized permuton
is the limit of a sequence of $(n,m)$-permutations.

\begin{thm} \label{invMain}
Let $\mu$ be a generalized permuton.
Then there exists a sequence $\nu_{n}=\nu_{n,m_n}$ of $(n,m_n)$-permutations,
such that $\nu_{n} \xrightarrow[]{t} \mu$ as $n\to\infty$.
\end{thm}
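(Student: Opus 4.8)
The plan is to construct the approximating sequence by discretizing the generalized permuton $\mu$ directly, following the standard "sampling" scheme used in the permuton case. Fix $\lambda \in (0,1]$ with $\mu \in \WD_\lambda$. For each $n$, I would partition the strip $[0,\lambda]\times[0,1]$ into cells using a regular grid: split $[0,\lambda]$ into $n$ equal subintervals of length $\lambda/n$ (these index the "places" $1,\dots,n$), and split $[0,1]$ into $N$ equal subintervals where $N$ will be chosen of the same order as $n$. The key point is that because the first marginal of $\mu$ is uniform on $[0,\lambda]$, each vertical strip $I_i = [(i-1)\lambda/n, i\lambda/n]$ carries exactly $\mu$-mass $1/n$; conditioning $\mu$ on $I_i$ gives a probability measure $\mu_i$ on $[0,1]$, and I would let $\nu_n(i)$ be (roughly) $\lceil N \cdot F_{\mu_i}^{-1}(\text{something}) \rceil$ — more precisely, I would choose the selected values so that the $m_n := n$ selected "heights" are placed in the cells in proportion to the conditional masses $\mu_i$. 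Here $m_n/n \to \lambda$ is automatic since $m_n$ should track the total vertical spread; I need to be a little careful that the *second* marginal having slope at most $\lambda$ is exactly the constraint that makes this allocation feasible with each value used at most once.

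The second step is to verify the two convergence ingredients that define $\nu_n \xrightarrow{t} \mu$: convergence of the marginal distributions, and convergence of all subpermutation densities. Marginal convergence is immediate from the construction — the empirical distribution of selected places converges to uniform on $[0,\lambda]$ by design, and the empirical distribution of selected values converges weakly to the second marginal of $\mu$ because we sampled the heights according to that marginal on each cell with mesh $\to 0$. For the subpermutation densities, I would use the interpretation (established in the earlier sections and in \cite{Hoppen}) that the density of a pattern $\tau$ of length $k$ in $\nu_n$ equals, up to $o(1)$ error, the probability that $k$ i.i.d. points drawn from the generalized permuton $\mu_n$ associated to $\nu_n$ induce the pattern $\tau$. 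Since $\mu_n \to \mu$ weakly by the grid refinement (the discretized measure converges in the Lévy–Prokhorov / rectangular distance sense to $\mu$), and pattern-inducing is a $\mu$-a.e. continuous functional of the $k$ sample points (the set where two points share a coordinate has $\mu^{\otimes k}$-measure zero, using continuity of the second marginal and uniformity of the first), the densities converge. This gives $\mu_n \xrightarrow{t} \mu$ in $\WD$, and by the equivalence of convergence notions for generalized permutons stated in the excerpt, also convergence in the usual weak sense.

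The main obstacle I anticipate is the feasibility of the combinatorial construction: I must produce genuine $(n,m_n)$-permutations — each value in $\{1,\dots,n\}$ used at most once and the order respecting the target — realizing the prescribed cell masses. The constraint "slope of the second marginal is at most $\lambda$" is precisely what guarantees this is possible: it says no horizontal band of height $h$ carries more than $\lambda h$ of the mass, equivalently (after scaling by $n$ and $N$) no band of $N h$ candidate values needs to absorb more than $n\lambda h \approx m_n h$ selected heights, so a greedy / Hall's-theorem-style assignment succeeds with only $O(1)$ rounding slack per cell. I would state this as a lemma (a discrete transportation / bipartite matching statement), prove it via Hall's condition using the slope bound, and then absorb the rounding errors into the $o(1)$ terms of the density and marginal estimates. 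The remaining steps — checking that $m_n/n \to \lambda$, that $n\to\infty$, and that the error terms are uniform over patterns $\tau$ of each fixed length $k$ — are routine once the matching lemma is in place.
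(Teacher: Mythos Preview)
Your approach is genuinely different from the paper's, and the coarse idea---discretize, then realize the cell masses by a matching---can be made to work, but as written there is a real gap together with some parametrization errors.

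First, the bookkeeping is inconsistent: you set ``$m_n := n$'' and then assert $m_n/n \to \lambda$. In the paper's conventions an $(n,m)$-permutation maps $[m]$ into $[n]$ and its permuton lives on $[0,m/n)\times[0,1)$; what you actually need is a pair $(n,m_n)$ with $m_n/n \to \lambda$, so your $n$ columns should be the paper's $m$ and your $N$ rows the paper's $n$. Also, the slope bound on $F_y$ is $1/\lambda$, not $\lambda$; the Hall-type count you wrote only comes out right after this correction.

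Second, and more seriously, you conflate two constructions. The first---pick for each column $i$ a single height via $F_{\mu_i}^{-1}(\cdot)$---fails: if $\mu$ is a product measure (uniform on $[0,\lambda]$ times any law with slope $\le 1/\lambda$), every conditional $\mu_i$ is the same, any deterministic quantile choice collapses to a constant, and no perturbation recovers $\mu$. The second---fix a coarse $K\times K$ grid, target $a_{ij}\approx m\,\mu(R_{ij})$ points per cell, and use the slope bound to check the row capacity $\sum_i a_{ij}\le n/K$---does work, but your Hall argument as stated only certifies that \emph{some} injection with the right marginal counts exists; you still have to organize the assignment cell by cell so that the resulting empirical measure matches $\mu$ on every $R_{ij}$ up to $O(1/K)$, and then pass to $d_\infty$. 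That lemma is not hard, but it is the actual content and you have not stated it.

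For contrast, the paper avoids building the matching from scratch: it first approximates $\mu$ by a rational $\lambda=M/N$, uses the $\tfrac{j}{M}$-quantiles of $F_y$ to cut $[0,1]$ into ``gray'' stripes of equal mass, snaps each to a nearby ``black'' stripe of height exactly $1/N$ (disjointness of the black stripes is exactly where the slope bound enters), deletes the zero-mass ``white'' gaps, and rescales to get an ordinary permuton on $[0,1]^2$. Theorem~1.6(ii) of \cite{Hoppen} then supplies approximating permutations of $[kM]$, and undoing the geometry yields $(kN,kM)$-permutations close to $\mu$ in $d_\infty$. This buys a short proof leveraging the known permuton result; your route would be more self-contained but needs the coarse-grid matching lemma made precise.
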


This theorem is a generalization of Theorem 1.6 (ii) of \cite{Hoppen}.
To prove our result, we use directly  Theorem 1.6 (ii) of \cite{Hoppen} and some  geometric transformations.

In Section \ref{variations}, the definition and  main properties of the generalized permutons are presented.
In Section \ref{distances}, the distances of generalized permutons are studied.
In Section \ref{convergences}, convergence theorems for generalized permutons are proved.

\section{$(n,m)$-permutations and $\lambda$-permutons} \label{variations}
\setcounter{equation}{0}
Let $[n]= \{1,2,\dots , n\}$ be the set of the first $n$ positive integers.
Let $m$ be a fixed positive integer, assume $m\le n$.
We select $m$ elements out of the set $[n]$ without replacements and 
assume that the order of the selection matters.
Denote the selected numbers by $\nu= \nu_{n,m} =(\nu(1), \nu(2), \dots , \nu(m))$.
We call $\nu$ an $(n,m)$-permutation.
The set of $(n,m)$-permutations with fixed $n$ and $m$ is denoted by $\VD_{n,m}$ 
and the set of all $(n,m)$-permutations by $\VD= \cup_{n,m}\VD_{n,m}$.
The elements of $\VD$ are called generalized permutations.
We shall study the limiting behaviour of $\nu= \nu_{n,m}$ as $n,m \to \infty$ so that 
$\frac{m}{n} \to \lambda$, where $\lambda \in (0,1]$ is fixed.

Later we shall apply the notion of the permutation density.
To this end it will be convenient to distinguish the usual and the generalized permutations.
We shall denote the set of all permutations 
$\sigma= \sigma_{k} =(\sigma(1), \sigma(2), \dots , \sigma(k))$
of $[k]$ by $\PD_{k}$ and the set of all permutations
by $\PD= \cup_{k=1}^\infty \PD_{k}$.

We associate a probability measure $\mu =\mu_{n,m}$ to the $(n,m)$-permutation $\nu$ as follows.
We divide the rectangle $[0, \frac{m}{n}] \times [0,1]$ into $nm$ squares of sizes $\frac{1}{n}\times\frac{1}{n}$ 
and the weights of the $m$ squares picked by $\nu$ will be defined as $\frac{1}{m}$,
the weights of the other squares will be $0$.
Before going to the precise definition, we show a simple example.

\begin{exmp} \label{ex1}
Let $n=5$, $m=3$ and $\nu= (2,4,1)$.
Then the measure $\mu$ associated to $\nu$ is visualized on Figure \ref{fig1}.
\begin{figure}[h!]
    \centering
        \includegraphics[width=0.6\textwidth, height=0.6\textwidth]{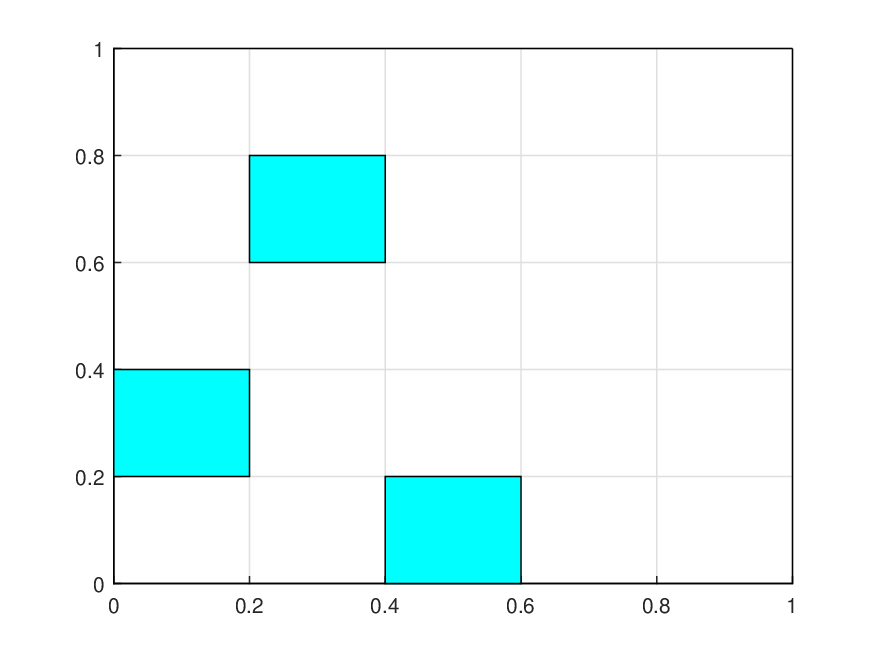}
    \caption{The measure $\mu$ in Example \ref{ex1}.}\label{fig1}
\end{figure}

\noindent
The marginal densities are
$$
f_x(x) =\begin{cases} \frac{5}{3},& \text{if} \  x\in [0, \frac{3}{5}) , \\
                      0, & \text{otherwise}, 
                      \end{cases}
$$
$$
f_y(y) =\begin{cases} \frac{5}{3},& \text{if} \  x\in [0, \frac{2}{5}) \cup [\frac{3}{5}, \frac{4}{5}) , \\
                      0, & \text{otherwise}. 
                      \end{cases}
$$
The marginal distribution functions are
$$
F_x(x) =\begin{cases} 0, & \text{if} \ x\le 0, \\
                       \frac{5}{3}x,& \text{if} \  x\in (0, \frac{3}{5}] , \\
                      1, & \text{if} \ x> \frac{3}{5}, 
                      \end{cases}
$$
$$
F_y(y) =\begin{cases} 0, & \text{if} \ x\le 0, \\
                      \frac{5}{3}x,& \text{if} \  x\in (0, \frac{2}{5}] , \\
                      \frac{2}{3},& \text{if} \  x\in (\frac{2}{5}, \frac{3}{5}] , \\
                      \frac{5}{3}x -\frac{1}{3},& \text{if} \  x\in (\frac{3}{5}, \frac{4}{5}] , \\
                      1, & \text{if} \ x> \frac{4}{5}. 
                      \end{cases}
$$
The marginal densities and distribution functions of the measure $\mu$ are visualized on 
Figure \ref{fig2}.
\begin{figure}[h!]
    \centering
        \includegraphics[width=0.95\textwidth, height=0.7\textwidth]{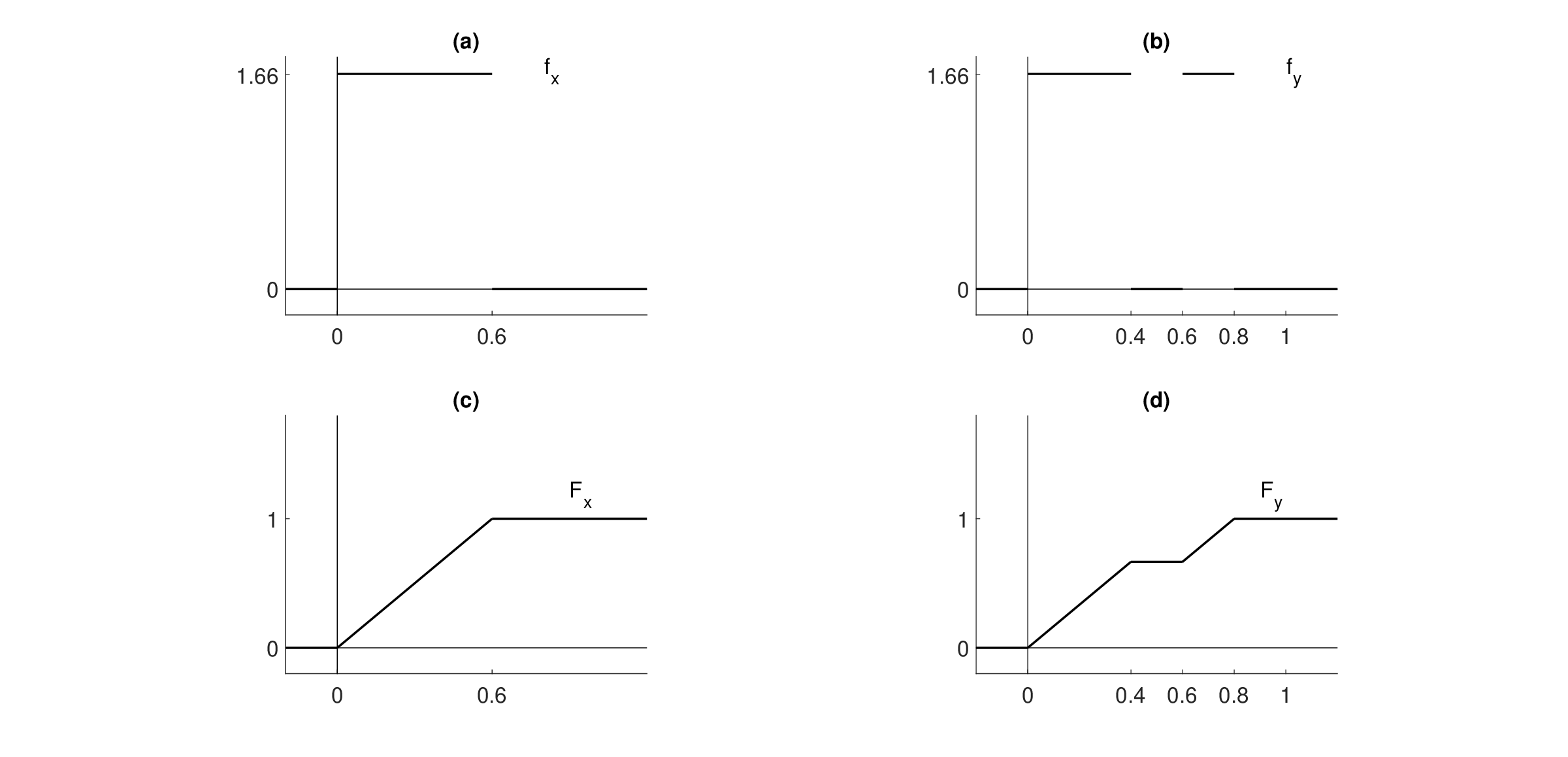}
    \caption{The marginal densities and distribution functions of the measure 
    $\mu$ in Example \ref{ex1}.}\label{fig2}
\end{figure}
\end{exmp}

\begin{definition} \label{W}
Let $m,n$ be fixed positive integers with $m\le n$.
Let $\nu= \nu_{n,m} =(\nu(1), \nu(2), \dots , \nu(m))$ be an $(n,m)$-permutation.
We denote by $S_{h,l}$ the square
\begin{equation}  \label{Shl}
S_{h,l}= \left[\frac{h-1}{n}, \frac{h}{n} \right) \times  \left[\frac{l-1}{n}, \frac{l}{n} \right), \ \
h=1,\dots,m, \ l=1,\dots,n.
\end{equation}
Let 
\begin{equation}  \label{fnu}
f_\nu (x,y) = \frac{n^2}{m}, \ \ \text{if} \ \ (x,y)\in  S_{h,\nu(h)} \ \ \text{for some} \ \ 
h\in \{ 1, \dots , m \}
\end{equation}
and let $f_\nu (x,y) =0$ otherwise.
Then $f_\nu$ is a probability density function of a probability measure $\mu= \mu_\nu$ 
defined on the rectangle $[0, \frac{m}{n})\times [0, 1)$.
$\mu_\nu$ is called the generalized permuton corresponding to the $(n,m)$-permutation $\nu$.
The set of the generalized permutons corresponding to the $(n,m)$-permutations 
from the set $\VD_{n,m}$ is denoted by $\WD_{n,m}$.
\end{definition}

By Definition \ref{W}, any $(n,m)$-permutation $\nu \in \VD_{n,m}$ determines a unique 
generalized permuton $\mu \in \WD_{n,m}$.
We can see that this relation is one-to-one, i.e.
any generalized permuton $\mu \in \WD_{n,m}$ determines a unique $(n,m)$-permutation $\nu \in \VD_{n,m}$.

\begin{rem}
Let $\lambda = \frac{m}{n}$.
The marginal distribution of the generalized permuton $\mu \in \WD_{n,m}$ 
on the $x$-axis is uniform with density function 
$$
f_x(x) =\begin{cases} \frac{1}{\lambda},  & \text{if} \  x\in [0, \lambda) , \\
                      0, & \text{otherwise}, 
                      \end{cases}
$$
and with distribution function 
$$
F_x(x) =\begin{cases} 0, & \text{if} \ x\le 0, \\
                       \frac{1}{\lambda}x,& \text{if} \  x\in (0, \lambda] , \\
                      1, & \text{if} \ x> \lambda  .
                      \end{cases}
$$
The marginal density function $f_y$ on the $y$-axis is a step function with values $\frac{1}{\lambda}$ or
$0$, 
while the marginal distribution function $F_y$ on the $y$-axis is a broken line
with slopes $\frac{1}{\lambda}$ or $0$.
\end{rem}

\begin{definition}
Let $\nu \in \VD_{n,m}$ and let $\mu \in \WD_{n,m}$ be the unique generalized 
permuton corresponding to it.
Then the marginal distribution functions $F_x$ and $F_y$ of $\mu$ are considered as the marginal
distribution functions of $\nu$.
\end{definition}

Our aim is to find the limit of the $(n,m)$-permutation sequence $\nu_{n,m}$ as $n,m \to \infty$ so that 
$\frac{m}{n} \to \lambda \in (0,1]$.
We can suppose that the number $m$ is determined by $n$, i.e. $m=m_n$, but we omit the subscript $n$.

If $\mu_{n,m}$ denotes the generalized permuton corresponding to $\nu_{n,m}$,
then the usual convergence of the probability measures $\mu_{n,m} \Rightarrow  \mu$
implies the convergence of the marginals,
so the first marginal distribution function of $\mu$ should be
$F_x(x) =\frac{1}{\lambda}x$, if $x\in [0, \lambda]$, 
and the second marginal distribution should be concentrated to $[0,1]$ and it should be 
a Lipschitz function with Lipschitz constant $\frac{1}{\lambda}$, i.e. its increments should satisfy
$F_y(b) -F_y(a) \le \frac{b-a}{\lambda}$ for $0\le a < b \le 1$.
These considerations lead to the following general definition of $\lambda$-permutons.

\begin{definition}
The probability measure $\mu$ is called a $\lambda$-permuton, 
if for a fixed $\lambda$, $0<\lambda\le 1$,
 $\mu$ is concentrated on $[0, \lambda] \times [0,1]$,
its first marginal distribution function is
$F_x(x) =\frac{1}{\lambda}x $ if $x\in [0, \lambda]$, 
and the second marginal distribution function is continuous and it
 satisfies
$F_y(b) -F_y(a) \le \frac{b-a}{\lambda}$ for $0\le a < b \le 1$.
For a fixed $\lambda$, let $\WD_{\lambda}$ denote the set of all $\lambda$-permutons.
Let $\WD = \cup_{\lambda\in (0,1]} \WD_{\lambda}$.
The elements of $\WD$ are called generalized permutons.
\end{definition}

We shall see that $\WD$ contains the limits of sequences of $(n,m)$-permutations.

\begin{rem}
We shall study the convergence of sequences on generalized 
permutons $\mu_{n}$ with parameters
$\lambda_n$ such that $\lambda_n  \to \lambda$ for some fixed $\lambda \in (0,1]$.
So we usually suppose that $0< \underline{\lambda} \le \lambda_n \le \overline{\lambda} \le 1$ 
for all $n$.
Sometimes it is convenient to complement the domain of $\mu_{n}$ with a zero measure stripe, 
and then we can suppose that each $\mu_{n}$ is defined on $[0, 1] \times [0,1]$. 
   \end{rem}  

We recall that $x_q$, $0< q<1$, is the $q$-quantile of the one-dimensional continuous
distribution function $F$ if $x_q = \sup \{x \ : \ F(x) = q\}$.  

Now, we define the probability measure $\mu_\sigma$ corresponding to 
the $\lambda$-permuton $\mu$ and a permutation $\sigma$.

\begin{definition}  \label{def3}
Let $\mu$ be a $\lambda$-permuton with marginal distribution functions 
$F_x$, $F_y$ and parameter $\lambda$.
Let $k$ be a fixed positive integer.
We define the $k$-subdivision of $\mu$ as the following array of rectangles $R_{i,j}$.
Let
$$
0=x_0 < x_1 < \dots < x_k=\lambda, \qquad
0=y_0 < y_1 < \dots < y_k=1
$$
so that $x_i$ is the $\frac{i}{k}$-quantile of $F_x$ ($i=1,\dots ,k-1$)
and
$y_j$ is the $\frac{j}{k}$-quantile of $F_y$ ($j=1,\dots ,k-1$).
Then define the rectangle $R_{i,j}$ as
$$
R_{i,j} = [x_{i-1}, x_i ) \times [y_{j-1}, y_j ), \quad i,j= 1, 2, \dots , k.
$$
Let $\sigma=(\sigma(1), \sigma(2), \dots , \sigma(k))\in \PD_k$ be a permutation of $[k]$.
Define the two-dimensional density function $f_\sigma$ as
\begin{equation}
f_\sigma (x,y)= \frac{1}{k} \cdot \frac{1}{x_i- x_{i-1}} \cdot \frac{1}{y_{\sigma(i)}- y_{\sigma(i)-1}},
\ \ \text{if} \ \ (x,y) \in R_{i,\sigma(i)}
\end{equation}
for some $i= 1, \dots , k$
and let $f_\sigma (x,y)=0$ otherwise.
The corresponding distribution function is denoted by $ F_\sigma (x,y)$.
The density function $f_\sigma$ defines a probability measure $\mu_\sigma$ on  
$[0, \lambda] \times [0,1]$.
\end{definition}

According to $\mu_\sigma$, the measure of any $R_{i,\sigma(i)}$ is $\frac{1}{k}$, 
and the measure of any other rectangle of the $k$-subdivision is $0$.

\begin{exmp} \label{ex2}
Consider an example for $\mu_\sigma$ in Definition \ref{def3}.
Let $k=4$ and $\sigma=(2,3,1,4)$.
The four rectangles having positive, i.e. $\frac{1}{4}$ measure, are visualized on
Figure \ref{fig3}.
\begin{figure}[h!]
    \centering
        \includegraphics[width=0.8\textwidth, height=0.6\textwidth]{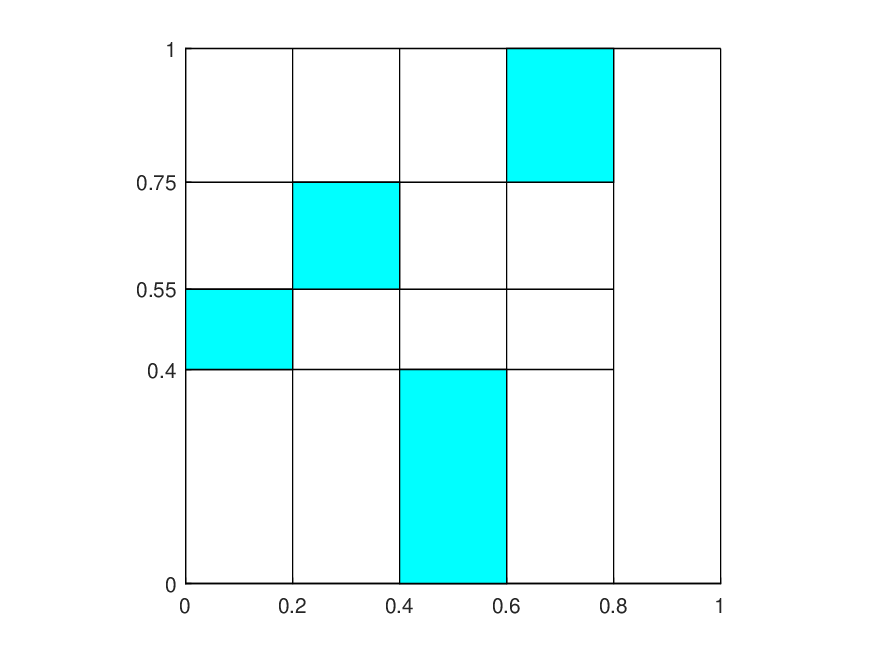}
    \caption{The measure $\mu_\sigma$ in Example \ref{ex2}.}\label{fig3}
\end{figure}
Here the quantiles are determined by the marginal distribution functions of  $\mu$.
In our example $\lambda=0.8$ and the quantiles on the $x$ axis are $0.2, 0.4, 0.6$
and on the $y$ axis they are $0.4, 0.55, 0.75$.
\end{exmp}

\begin{definition}  \label{def4}
Let $\mu$ be a generalized permuton.
Choose a sample from the distribution $\mu$, i.e.
let $(\xi_1, \eta_1), \dots , (\xi_k, \eta_k)$
be independent identically distributed random vectors such that the distribution
of $(\xi_i, \eta_i)$ is $\mu$ for each $i\in [k]$.
Let
$$
\xi_1^\ast< \xi_2^\ast < \dots < \xi_k^\ast , \qquad
\eta_1^\ast< \eta_2^\ast < \dots < \eta_k^\ast
$$
be the ordered samples
(remark that the marginal distribution functions are continuous, so the sample elements are
distinct with $\mu$-probability $1$, so we can use strict inequalities in the above relations).
The  $\mu$-random permutation is defined as
$$
\sigma=(\sigma(1), \sigma(2), \dots , \sigma(k)),
$$
if for any $i\in[k]$ we have $\xi_i^\ast =\xi_l$ and $\eta_l = \eta_{\sigma(i)}^\ast$ for 
a certain $l$.
We shall denote the above $\mu$-random permutation by $\mu^{(k)}$.
Now, the $\mu$-random generalized permuton is defined as
$\sigma(k,\mu) =\mu_\sigma$, where $\sigma$ is a $\mu$-random permutation 
and $\mu_\sigma$ is from Definition \ref{def3}.
\end{definition} 

That is, $\sigma(k,\mu)$ is a generalized permuton depending on the randomness,
as now $\sigma$ in the expression $\mu_\sigma$ is based on the sample from distribution $\mu$.

\begin{rem}
Let $F$ be a two-dimensional distribution function with marginal distribution functions
$F_x$  and $F_y$.
Then for $x_1<x_2$ and $y_1<y_2$ we have
\begin{equation} \label{H7}
F(x_2, y_2) -  F(x_1, y_1) \le (F_x(x_2) -  F_x(x_1)) + (F_y(y_2) -  F_y(y_1)).
\end{equation}
If the slopes of $F_x$  and $F_y$ are bounded by $C$, then
for $x_1<x_2$ and $y_1<y_2$ we have
\begin{equation} \label{H7+}
F(x_2, y_2) -  F(x_1, y_1) \le C((x_2 -  x_1) + (y_2 -  y_1)).
\end{equation}
\end{rem}

\begin{rem}
Assume that the maximum of the slopes of the marginal distribution functions of $\mu$
is $\frac{1}{\lambda}$.
As the $\frac{i}{k}$ quantiles of the marginals of $\mu_\sigma$ are the same as those of $\mu$,
the slopes of both marginal distribution functions of $\mu_\sigma$
are also at most $\frac{1}{\lambda}$.
\end{rem}
%

%%%%%%%%%%%%%%%%%%%%%%%%%%%%%%%%%%%%%%%%%%%%%%%%%%%%%%%%%%%%%%%%%%%%%%%%%%%%%%%%%%
\section{Distances of generalized permutons} \label{distances}
\setcounter{equation}{0}
%%%%%%%%%%%%%%%%%%%%%%%%%%%%%%%%%%%%%%%%%%%%%%%%%%%%%%%%%%%%%%%%%%%%%%%%%%%%%%%%%%%
%

\begin{definition}  \label{def4+}
Let $\mu_i$ be a generalized permuton and let $F_i$ be the two-dimensional distribution function 
corresponding to it, $i=1,2$.
The $d_\infty$  distance of the generalized permutons $\mu_1$ and $\mu_2$ is defined as
\begin{equation} \label{H33}
d_\infty(\mu_1,\mu_2)
=\|F_1-F_2 \|_\infty =
\sup_{x,y\in[0,1]} |F_1(x,y) -F_2(x,y)|.
\end{equation}
The $d_\square$  distance is defined as
\begin{equation} \label{H32}
d_\square (\mu_1,\mu_2)
=d_\square(F_1, F_2 )=
\sup_{\substack{x_1, x_2, y_1, y_2 \in[0,1] \\ x_1 <x_2, y_1< y_2}} 
|\mu_1([x_1, x_2] \times [ y_1, y_2]) - \mu_2([x_1, x_2] \times [ y_1, y_2])|.
\end{equation}
\end{definition}

Then, see \cite{Hoppen}, we have
\begin{equation} \label{H34}
d_\infty(\mu_1,\mu_2) \le d_\square (\mu_1,\mu_2) \le 4 d_\infty(\mu_1,\mu_2) .
\end{equation}

\begin{lem}  \label{H4.2}
There exists a $k_0$ such that for $k> k_0$ and for any $\lambda$-permuton $\mu$ we have
\begin{equation} \label{H37}
\P \left( d_\square \left(\mu, \sigma(k, \mu)\right) \le 16 k^{-\frac{1}{4}}   \right)
\ge 1- \frac{1}{2} e^{-\sqrt{k}},
\end{equation}
where $\sigma(k, \mu)$ is an arbitrary $\mu$-random $\lambda$-permuton introduced in Definition \ref{def4}.
\end{lem}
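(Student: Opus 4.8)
The plan is to mimic the proof of the analogous concentration statement in \cite{Hoppen} (their Lemma 4.3 / the sampling lemma for permutons), tracking how the parameter $\lambda$ enters the constants. The key point is that the distribution function $F_\sigma$ of $\mu_\sigma = \sigma(k,\mu)$ is, up to error terms coming from the empirical quantiles, a rescaled copy of the two-dimensional empirical distribution function of the sample $(\xi_1,\eta_1),\dots,(\xi_k,\eta_k)$, and the Dvoretzky--Kiefer--Wolfowitz inequality controls how far that empirical distribution function is from $F$ uniformly. So first I would fix a $\lambda$-permuton $\mu$ with two-dimensional distribution function $F$ and marginals $F_x$, $F_y$ (slopes bounded by $1/\lambda \le 1/\underline\lambda$, but here $\lambda$ is the genuine parameter so the bound is $1/\lambda$), draw the i.i.d. sample, and write $F_k$ for the bivariate empirical distribution function and $F_{x,k}$, $F_{y,k}$ for the one-dimensional empirical marginals.

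The next step is the estimate of the empirical quantiles. Since $x_i$ is the $\tfrac ik$-quantile of $F_x$ and $\xi^\ast_i$ is the order statistic, the event that $|F_x(\xi^\ast_i) - \tfrac ik|$ is large for some $i$ is controlled by the one-dimensional DKW inequality applied to $F_{x,k}$; because $F_x$ has slope exactly $1/\lambda$ on $[0,\lambda]$ (hence $F_x^{-1}$ is $\lambda$-Lipschitz), a deviation of size $\varepsilon$ in the probability scale translates into a deviation of size at most $\lambda\varepsilon \le \varepsilon$ in the $x$-coordinate, so $|\xi^\ast_i - x_i| \le \lambda\varepsilon$ on that event, and similarly $|\eta^\ast_{j} - y_j| \le \varepsilon$ using the slope bound $1/\lambda$ for $F_y$ on the relevant piece (on a flat piece the quantile is pinned exactly). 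With $\varepsilon$ chosen of order $k^{-1/4}$, the DKW tail is $2e^{-2k\varepsilon^2} = 2e^{-2\sqrt k}$, comfortably below $\tfrac14 e^{-\sqrt k}$ for large $k$; likewise for the bivariate DKW (or a union bound over a $k^2$-grid of rectangles with a Hoeffding estimate) controlling $\|F_k - F\|_\infty \le \varepsilon$ off an event of probability $\le \tfrac14 e^{-\sqrt k}$. Intersecting these few events gives, with probability $\ge 1 - \tfrac12 e^{-\sqrt k}$, simultaneous control of all quantile errors and of $\|F_k-F\|_\infty$.

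On that good event I would then compare $\mu_\sigma$ to $\mu$ on a fixed rectangle $[x_1,x_2]\times[y_1,y_2]$. The mass $\mu_\sigma([x_1,x_2]\times[y_1,y_2])$ equals $\tfrac1k$ times the number of indices $i$ with $R_{i,\sigma(i)}$ essentially inside the rectangle, which differs from $F_k$ evaluated at the nearby empirical-quantile corners by at most the mass of the at most $O(k\varepsilon + 1)$ boundary cells — each of mass $\tfrac1k$ — plus the within-cell redistribution error, which is itself $\le \tfrac1k$ per boundary cell; using $F_y(b)-F_y(a)\le (b-a)/\lambda$ and the quantile estimates, the number of cells met by the boundary is $O(k\varepsilon/\lambda + 1)$. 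Combining this with $|F_k - F|\le \varepsilon$ and the Lipschitz bound \eqref{H7+} with $C = 1/\lambda$ to pass from the perturbed corners back to $(x_i,y_j)$, one gets $|\mu_\sigma(Q) - \mu(Q)| \le c(\varepsilon + \varepsilon/\lambda + 1/k)$ for an absolute constant $c$; taking the sup over $Q$ gives $d_\square(\mu,\sigma(k,\mu)) \le c(\varepsilon + \varepsilon/\lambda + 1/k)$, and since $\lambda\le 1$ and $\varepsilon \asymp k^{-1/4}$ dominates $1/k$, this is $\le 16 k^{-1/4}$ once $k > k_0$ for a suitable absolute $k_0$ (the factor $16$ absorbing $c$ and the $\lambda^{-1}$ — note $\lambda$ may be small, but the lemma as stated ranges over all $\lambda$-permutons for a single $\lambda$, and one may harmlessly strengthen the constant or restrict to $\lambda \ge \underline\lambda$ as in the running assumption; I would remark on this).

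The main obstacle is the bookkeeping in the last step: carefully bounding the number of $k$-subdivision cells straddling the boundary of an arbitrary query rectangle and the mass these contribute, uniformly in the rectangle, while keeping the $\lambda$-dependence explicit — everything else (DKW, the quantile-to-coordinate translation, the triangle inequality \eqref{H34} between $d_\infty$ and $d_\square$) is standard and essentially quoted from \cite{Hoppen}.
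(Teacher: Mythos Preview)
Your strategy is close in spirit to the paper's, but there is a genuine gap in the quantile step for the second coordinate. You claim $|\eta^\ast_j - y_j| \le \varepsilon$ from the slope bound on $F_y$, adding that ``on a flat piece the quantile is pinned exactly''. This fails: nothing prevents $F_y$ from being constant on an interval $[a,b]$ of positive length, in which case $y_j = b$ (by the definition of quantile as a $\sup$), while the order statistic $\eta^\ast_j$ may sit at or below $a$; sample points avoid $(a,b)$ a.s., but nothing forces $\eta^\ast_j$ near $b$. Hence $|\eta^\ast_j - y_j|$ can be of order $1$ even on your good DKW event, and the subsequent appeal to \eqref{H7+} to pass from $(\xi^\ast_i,\eta^\ast_j)$ to $(x_i,y_j)$ collapses. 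The remedy is to stay in the probability scale and use \eqref{H7} rather than \eqref{H7+}: this gives $|F(\xi^\ast_i,\eta^\ast_j) - F(x_i,y_j)| \le |F_x(\xi^\ast_i) - \tfrac{i}{k}| + |F_y(\eta^\ast_j) - \tfrac{j}{k}|$, and both terms are $\le \varepsilon$ by the one-dimensional DKW bound, with no reference to coordinates, flat pieces, or $\lambda$. This also eliminates the stray $\lambda^{-1}$ you were worried about absorbing into the constant $16$.

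The paper's execution is a little different and sidesteps your ``main obstacle'' entirely. It first passes to $d_\infty$ via \eqref{H34} and then reduces an arbitrary $(x,y)$ to the nearest corner $(x_i,y_j)$ of the theoretical $k$-subdivision at cost $\tfrac{4}{k}$, again by \eqref{H7}; so no boundary-cell bookkeeping for $d_\square$ is needed. At the grid corners one has the exact identity $F_{\sigma(k,\mu)}(x_i,y_j) = \tfrac{1}{k}\sum_{t=1}^k \I\{\xi_t\le \xi^\ast_i,\ \eta_t\le \eta^\ast_j\}$. Rather than DKW, the paper introduces shifted points $x_i+x_i'$, $y_j+y_j'$ defined in probability scale by $F_x(x_i+x_i') = \tfrac{i}{k}+\varepsilon$, $F_y(y_j+y_j') = \tfrac{j}{k}+\varepsilon$, splits according to whether $\xi^\ast_i < x_i+x_i'$ and $\eta^\ast_j < y_j+y_j'$, and applies the binomial Hoeffding bound three times per grid point (and symmetrically for the lower tail), finishing with a union bound over the $k^2$ corners. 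Everything is $\lambda$-free, which is why the statement holds with an absolute $k_0$.
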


\begin{proof}
Let $\mu$ be a $\lambda$-permuton on $[0,\lambda] \times [0,1]$ and let 
$F$ be its two-dimensional distribution function.
Let $\sigma(k, \mu)$ be a $\mu$-random $\lambda$-permuton and let $F_k$ be its 
two-dimensional distribution function.
By \eqref{H34}, if we prove
\begin{equation} \label{H39}
\P \left( \sup_{x,y\in[0,1]} |F(x,y) -F_k(x,y)| \ge 4 k^{-\frac{1}{4}}   \right)
\le \frac{1}{2} e^{-\sqrt{k}},
\end{equation}
then we obtain \eqref{H37}.

Let us use the notation from Definition \ref{def3}.
Let $(x,y)\in R_{i,j}$.
Then
\begin{eqnarray} \label{H18}
&&| F_k(x,y)- F(x,y) |\le \\
&\le & | F_k(x,y)- F_k(x_i,y_j) |+ | F_k(x_i,y_j)- F(x_i,y_j) | +| F(x_i,y_j)- F(x,y) |\le  \nonumber \\
&\le & \frac{2}{k}+ | F_k(x_i,y_j)- F(x_i,y_j) | + \frac{2}{k}, \nonumber
\end{eqnarray}
using \eqref{H7} and the fact that $x_i$ is the $\frac{i}{k}$ quantile of $F_x$ and $y_j$ is the $\frac{j}{k}$ quantile of $F_y$.

Let $\varepsilon= k^{-\frac{1}{4}}$.
Let $(\xi_1, \eta_1), \dots , (\xi_k, \eta_k)$ be the sample from the distribution $\mu$.
Then we have
\begin{equation} \label{H41-}
F_k(x_i,y_j) = \frac{1}{k} \sum_{t=1}^k \I\{ \xi_t \le \xi_i^\ast, \eta_t \le \eta_j^\ast \} ,
\end{equation} 
where $\I$ denotes the indicator function.
%%%%%%%%%%%%%%%%%%%%%%%%%%%%%%%%%%%%%%%%%%%%%%%%%%%%%%%%%%%%%%%%%%%%%%%%%%%%%%%%5
Now, we shall show that for all $i,j\in [k]$
\begin{equation} \label{H41}
\P \left( F_k(x_i,y_j) > F(x_i,y_j) +3\varepsilon   \right)
< 3 e^{-2\sqrt{k}} .
\end{equation} 
Let
$x'_i$ and $y'_j$ be such that
\begin{equation*}
F_x(x_i+ x'_i) = \frac{i}{k }  + \varepsilon, \qquad  F_y(y_j+ y'_j) = \frac{j}{k }  + \varepsilon .
\end{equation*} 
So \eqref{H7} gives
\begin{equation} \label{H42}
F(x_i+ x'_i, y_j+y'_j) +\varepsilon \le F(x_i, y_j) + 3 \varepsilon .
\end{equation} 
Let $A$ be the following event
\begin{equation} \label{H42+}
A=  \left\{\frac{1}{k} \sum_{t=1}^k \I\{ \xi_t \le \xi_i^\ast, \eta_t \le \eta_j^\ast \} 
> F(x_i+ x'_i, y_j+y'_j) +\varepsilon  \right\} .
\end{equation} 
Then \eqref{H42} gives that the left-hand side of \eqref{H41} is not greater than $\P(A)$.
So we have to find an upper bound of $\P(A)$.
We have
\begin{eqnarray} \label{H43}
\P(A) &=& \P\left(A \cap \{ \xi^\ast_i <  x_i+ x'_i  \} \cap \{ \eta^\ast_j <  y_j+ y'_j  \} \right) \\
&+&\P\left(A \cap \left(\{ \xi^\ast_i \ge  x_i+ x'_i  \} 
\cup \{ \eta^\ast_j \ge  y_j+ y'_j  \} \right) \right) \nonumber \\
&\le&
\P\left(A \cap \{ \xi^\ast_i <  x_i+ x'_i  \} \cap \{ \eta^\ast_j <  y_j+ y'_j  \} \right) \nonumber \\
&+&\P\left(\xi^\ast_i \ge  x_i+ x'_i  \right) +
\P\left( \eta^\ast_j \ge  y_j+ y'_j  \} \right) . \nonumber 
\end{eqnarray}

We need the following large deviation result for the binomial distribution.
Let $\zeta$ be a binomial random variable with parameters $k$ and $p$.
Then for $\varepsilon>0$
\begin{equation} \label{H44}
\P\left(\frac{1}{k} \zeta \ge p+\varepsilon \right)
\le \exp (-2k \varepsilon^2 ), \ \
\P\left(\frac{1}{k} \zeta \le p-\varepsilon \right)
\le \exp (-2k \varepsilon^2 ) .
\end{equation}
Then, with $p= F_x(x_i+ x'_i) = \frac{i}{k }  + \varepsilon$, we have
\begin{eqnarray*}
&&\P\left(\xi^\ast_i \ge  x_i+ x'_i  \right) \\
&&=\P \left( \frac{1}{k} \sum_{t=1}^k \I\{ \xi_t \le x_i+ x'_i\}  \le \frac{i}{k} \right)
= \P\left(\frac{1}{k} \zeta \le p-\varepsilon \right)
\le \exp (-2k \varepsilon^2 ) .
\end{eqnarray*}
Similarly 
$$
\P\left(\eta^\ast_i \ge  y_j+ y'_j  \right) 
\le \exp (-2k \varepsilon^2 ) .
$$
Moreover,
\begin{eqnarray*} 
&& \P\left(A \cap \{ \xi^\ast_i <  x_i+ x'_i \} \cap \{ \eta^\ast_j <  y_j+ y'_j \} \right)  \\
 = &&
\P \Bigg( \left\{ \frac{1}{k} \sum\nolimits_{t=1}^k \I\{ \xi_t \le \xi_i^\ast, \eta_t \le \eta_j^\ast \} 
> F(x_i+ x'_i, y_j+y'_j) +\varepsilon  \right\}     \\
&& \cap \{ \xi^\ast_i <  x_i+ x'_i  \} \cap \{ \eta^\ast_j <  y_j+ y'_j  \} \Bigg)  \\
=&&
\P \Bigg( \left\{ \frac{1}{k} \sum\nolimits_{t=1}^k 
\I\{ \xi_t \le x_i+ x'_i, \eta_t \le y_j+ y'_j \} 
\ge  F(x_i+ x'_i, y_j+y'_j) +\varepsilon  \right\}  \Bigg)   \\
= &&  \P\left(\frac{1}{k} \zeta \ge p+\varepsilon \right)
\le \exp (-2k \varepsilon^2 ) .
\end{eqnarray*}
From the above inequalities
$$
\P(A) \le 3 \exp (-2k \varepsilon^2 ) = 3 \exp (-2k (k^{-\frac{1}{4}})^2 ) = 3 \exp (-2\sqrt{k}).
$$
%
%%%%%%%%%%%%%%%%%%%%%%%%%%%%%%%%%%%%%%%%%%%%%%%%%%%%%%%%%%%%%%%%%%%%%%%%%%
Now, we have to prove that for all $i,j\in [k]$
\begin{equation} \label{H41+}
\P \left( F_k(x_i,y_j) < F(x_i,y_j) -3\varepsilon   \right)
< 3 e^{-2\sqrt{k}} .
\end{equation} 
Let now
$x''_i$ and $''_j$ be such that
\begin{equation*}
F_x(x_i- x''_i) = \frac{i}{k }  - \varepsilon, \qquad  F_y(y_j- y''_j) = \frac{j}{k }  - \varepsilon .
\end{equation*} 
So \eqref{H7} gives
\begin{equation} \label{H42++}
F(x_i- x''_i, y_j-y''_j) -\varepsilon \ge F(x_i, y_j) - 3 \varepsilon .
\end{equation} 
Let $A''$ be the following event
\begin{equation} \label{H42+++}
A''=  \left\{\frac{1}{k} \sum_{t=1}^k \I\{ \xi_t \le \xi_i^\ast, \eta_t \le \eta_j^\ast \} 
< F(x_i- x''_i, y_j-y''_j) -\varepsilon  \right\} .
\end{equation} 
Then \eqref{H42++} gives that the left-hand side of \eqref{H41+} is not greater than $\P(A'')$.
So we have to find an upper bound of $\P(A'')$.
We have
\begin{eqnarray} \label{H43+}
\P(A'') &=& \P\left(A'' \cap \{ \xi^\ast_i >  x_i- x''_i  \} \cap 
\{ \eta^\ast_j >  y_j- y''_j  \} \right) \\
&+&\P\left(A'' \cap \left(\{ \xi^\ast_i \le  x_i- x''_i  \} 
\cup \{ \eta^\ast_j \le  y_j- y''_j  \} \right) \right) \nonumber \\
&\le&
\P\left(A \cap \{ \xi^\ast_i >  x_i- x''_i  \} \cap \{ \eta^\ast_j >  y_j- y''_j  \} \right) \nonumber \\
&+&\P\left(\xi^\ast_i \le  x_i- x''_i  \right) +
\P\left( \eta^\ast_j \le  y_j- y''_j  \} \right) . \nonumber 
\end{eqnarray}
We again use the large deviation result for the binomial distribution.
Then
\begin{eqnarray*}
&&\P\left(\xi^\ast_i \le  x_i- x''_i  \right) \\
&&=\P \left( \frac{1}{k} \sum_{t=1}^k \I\{ \xi_t \le x_i- x''_i\}  \ge \frac{i}{k} \right)
= \P\left(\frac{1}{k} \zeta \ge p+\varepsilon \right)
\le \exp (-2k \varepsilon^2 ) .
\end{eqnarray*}
Similarly 
$$
\P\left(\eta^\ast_i \le  y_j- y''_j  \right) 
\le \exp (-2k \varepsilon^2 ) .
$$
Then
\begin{eqnarray*} 
&& \P\left(A'' \cap \{ \xi^\ast_i >  x_i- x''_i \} \cap \{ \eta^\ast_j >  y_j- y''_j \} \right)  \\
 = &&
\P \Bigg( \left\{ \frac{1}{k} \sum\nolimits_{t=1}^k \I\{ \xi_t \le \xi_i^\ast, \eta_t \le \eta_j^\ast \} 
< F(x_i- x''_i, y_j-y''_j) -\varepsilon  \right\}     \\
&& \cap \{ \xi^\ast_i >  x_i- x''_i  \} \cap \{ \eta^\ast_j >  y_j- y''_j  \} \Bigg)  \\
=&&
\P \Bigg( \left\{ \frac{1}{k} \sum\nolimits_{t=1}^k 
\I\{ \xi_t \le x_i- x''_i, \eta_t \le y_j- y''_j \} 
\le  F(x_i- x''_i, y_j-y''_j) -\varepsilon  \right\}     \\
= &&   \P\left(\frac{1}{k} \zeta \le p-\varepsilon \right)
\le \exp (-2k \varepsilon^2 ) .
\end{eqnarray*}
From the above inequalities
$$
\P(A'') \le 3 \exp (-2k \varepsilon^2 ) = 3 \exp (-2\sqrt{k}).
$$
By \eqref{H41} and \eqref{H41+}, we have
\begin{equation} \label{H40}
\P \left( |F_k(x_i,y_j) - F(x_i,y_j)| > 3\varepsilon   \right)
\le 6 e^{-2\sqrt{k}} .
\end{equation} 
Let $k$ be so large that $\frac{4}{k} <k^{-\frac{1}{4}}$.
Then, by \eqref{H18},
$$
| F_k(x,y)- F(x,y) |\le   k^{-\frac{1}{4}}+ | F_k(x_i,y_j)- F(x_i,y_j) | 
$$
if $(x,y) \in R_{i,j}$.
Then, for large enough $k$
\begin{eqnarray*} 
&&\P \left( \sup_{x,y\in[0,1]} |F(x,y) -F_k(x,y)| \ge 4 k^{-\frac{1}{4}}   \right) \\
&\le & 
\P \left( \max_{i,j\in[k]} |F(x_i,y_j) -F_k(x_i,y_j)| > 3 k^{-\frac{1}{4}}   \right) \\
&\le & 
\sum_{i,j\in[k]}
\P \left( |F(x_i,y_j) -F_k(x_i,y_j)| > 3 k^{-\frac{1}{4}}   \right) \\
&\le & 
6 k^2 e^{-2\sqrt{k}}
= \left( 12 k^2 e^{-\sqrt{k}} \right) \left(\frac{1}{2} e^{-\sqrt{k}}\right) \le
\frac{1}{2} e^{-\sqrt{k}}.     
\end{eqnarray*}
%%%%%%%%%%%%%%%%%%%%%%%%%%%%%%%%%%%%%%%%%%%%%%%%%%%%%%%%%%%%%%%%%%%%%%%%%%
This completes the proof.
\end{proof}
%

%%%%%%%%%%%%%%%%%%%%%%%%%%%%%%%%%%%%%%%%%%%%%%%%%%%%%%%%%%%%%%%%%%%%%%%%
\begin{definition}  \label{def5}
Let $\mu$ be a generalized permuton.
Recall that the $\mu$-random  permutation of order $k$ is denoted by
$\mu^{(k)}$, see Definition \ref{def4}.
Let $\tau\in \PD_k$ be a non-random  permutation of order $k$.
The density of $\tau$  in $\mu$ is defined as
\begin{equation}  \label{H24}
t(\tau, \mu) =\P \left(\tau= \mu^{(k)}\right).
\end{equation}
\end{definition}

\begin{rem}
A generalized permuton $\mu$ uniquely determines its marginal distributions $F_x$ and $F_y$
(where $F_x$ is uniform on $[0, \lambda]$, with $0< \lambda\le 1$).
Moreover, $\mu$ determines  $t(\tau, \mu)$ for any $\tau \in \PD$.
\end{rem}

\begin{prop}  \label{HLemma5.1}
The marginal distribution functions $F_x$ and $F_y$ and the densities
$t(\tau, \mu)$, $\tau \in \PD$, uniquely determine the generalized permuton $\mu$.
\end{prop}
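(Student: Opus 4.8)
The plan is to prove the statement directly: suppose $\mu_1,\mu_2$ are generalized permutons with the same marginal distribution functions $F_x,F_y$ and with $t(\tau,\mu_1)=t(\tau,\mu_2)$ for every $\tau\in\PD$; I would then show $d_\square(\mu_1,\mu_2)=0$, which by \eqref{H34} gives $d_\infty(\mu_1,\mu_2)=0$ and hence $\mu_1=\mu_2$. A first easy observation: since the common first marginal $F_x$ is uniform on $[0,\lambda]$, the parameter $\lambda$ is the same for $\mu_1$ and $\mu_2$, so both lie in $\WD_\lambda$ and Lemma \ref{H4.2} is applicable to each of them with one and the same (universal) $k_0$.

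The crucial step, which I would isolate as a preliminary claim, is that the distribution of the $\mu$-random generalized permuton $\sigma(k,\mu)$ of Definition \ref{def4} depends on $\mu$ only through its marginals $F_x,F_y$ and the finite collection of densities $\bigl(t(\tau,\mu)\bigr)_{\tau\in\PD_k}$. This is a matter of unwinding the definitions. By Definition \ref{def3}, the $k$-subdivision $\{R_{i,j}\}$ and the assignment $\sigma\mapsto\mu_\sigma$ are built purely from the $\tfrac{i}{k}$-quantiles of $F_x$ and $F_y$; in particular, for $\mu_1$ and $\mu_2$ this is literally one and the same map $\Phi_k\colon\PD_k\to\WD_\lambda$. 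By Definition \ref{def5}, the law of the $\mu$-random permutation $\mu^{(k)}$ on $\PD_k$ is exactly $\tau\mapsto t(\tau,\mu)$. Hence $\sigma(k,\mu_i)=\Phi_k\bigl(\mu_i^{(k)}\bigr)$ for $i=1,2$ are pushforwards of the \emph{same} probability law on $\PD_k$ under the \emph{same} map $\Phi_k$, so they have a common distribution $\Lambda_k$ on $\WD_\lambda$.

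Granting the claim, I would fix $k>k_0$ and apply Lemma \ref{H4.2} separately to $\mu_1$ and to $\mu_2$. Writing $E_i=\{w\in\WD_\lambda:\ d_\square(\mu_i,w)\le 16k^{-\frac14}\}$, the lemma gives $\Lambda_k(E_1)\ge 1-\tfrac12 e^{-\sqrt k}$ and $\Lambda_k(E_2)\ge 1-\tfrac12 e^{-\sqrt k}$. Since $\Lambda_k(E_1)+\Lambda_k(E_2)\ge 2-e^{-\sqrt k}>1=\Lambda_k(\WD_\lambda)$, the intersection $E_1\cap E_2$ is nonempty; choosing any $w\in E_1\cap E_2$ and using the triangle inequality for $d_\square$,
\[
d_\square(\mu_1,\mu_2)\ \le\ d_\square(\mu_1,w)+d_\square(w,\mu_2)\ \le\ 32\,k^{-\frac14}.
\]
Letting $k\to\infty$ yields $d_\square(\mu_1,\mu_2)=0$, hence $d_\infty(\mu_1,\mu_2)=0$, i.e. the two-dimensional distribution functions of $\mu_1$ and $\mu_2$ coincide, so $\mu_1=\mu_2$.

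The union-bound argument, the triangle inequality and the limiting step are all routine; the only point that needs genuine care is the preliminary claim that the law of $\sigma(k,\mu)$ is a function of the marginals and the densities alone. I expect that to be the main obstacle in the sense of being the step one must phrase precisely, but it is not deep: it is exactly the observation that the quantile-based subdivision of Definition \ref{def3} forgets everything about $\mu$ except its two marginals, while the sampling mechanism of Definition \ref{def4} enters $\mu_\sigma$ only through the random permutation $\mu^{(k)}$, whose distribution over $\PD_k$ is precisely $t(\cdot,\mu)$.
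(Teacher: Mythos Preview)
Your proof is correct and follows essentially the same route as the paper: both hinge on the observation that the law of $\sigma(k,\mu)$ is a function of the marginals and the densities $t(\cdot,\mu)$ alone, and then invoke Lemma~\ref{H4.2} to pass from this to uniqueness of $\mu$. The only cosmetic difference is the final step: the paper takes expectations, noting that $\E F_\sigma$ is determined by the data and approximates $F$ via \eqref{H39}, whereas you use an intersection argument to find a common realization close to both $\mu_1$ and $\mu_2$; both devices extract the same conclusion from the same lemma.
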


\begin{proof}
Fix a positive integer $k$ and an elementary event $\omega$.
Then the marginal distribution functions $F_x$ and $F_y$ and the 
$\mu$-random permutation $\sigma = \mu^{(k)} =\mu^{(k)}(\omega)$ 
determine the two-dimensional distribution function $F_{\sigma}(x,y)$ like in 
Definition \ref{def3}.
As $|F_{\sigma}| \le 1$, the expectation $\E F_{\sigma}$ exists and it is uniquely determined by
the permutation densities $t(\tau, \mu)$, $\tau \in \PD_k$.
By \eqref{H39}, the two-dimensional distribution function $F$ belonging to $\mu$
is approximated by $\E F_{\sigma}$ with precision $\varepsilon>0$ if $k> k_\varepsilon$ is large enough.
So $F$ is unique if $F_x$ and $F_y$ and the densities
$t(\tau, \mu)$, $\tau \in \PD$, are given.
\end{proof}
%

%%%%%%%%%%%%%%
%%%%%%%%%%%%%%%%%%%%%%%%%%%%%%%%%%%%%%%%%%%%%%%%%%%%%%%%%%%%%%%%%%%%%%%%%%%%%%%%%%%%%%%%%%%%
\section{Convergence of generalized permutons} \label{convergences}
\setcounter{equation}{0}
%%%%%%%%%%%%%%%%%%%%%%%%%%%%%%%%%%%%%%%%%%%%%%%%%%%%%%%%%%%%%%%%%%%%%%%%%%%%%%%%%%%%%%%%%%%%%
%

\begin{lem}  \label{lem41}
Let $\mu$  and $\mu_n$, $n=1,2, \dots$ be  generalized permutons.
Then the following statements are equivalent.
\begin{equation} \label{dinf}
d_\infty(\mu_n,\mu) \to 0 \quad \text{ as} \quad n\to \infty,
\end{equation}
\begin{equation} \label{dsquare}
d_\square(\mu_n,\mu) \to 0 \quad \text{ as} \quad n\to \infty,
\end{equation}
\begin{equation} \label{dmeasure}
\mu_n \Rightarrow \mu \quad \text{ as} \quad n\to \infty.
\end{equation}
\end{lem}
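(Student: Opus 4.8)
The plan is to establish the cycle of implications \eqref{dinf} $\Leftrightarrow$ \eqref{dsquare}, then \eqref{dinf} $\Rightarrow$ \eqref{dmeasure}, and finally \eqref{dmeasure} $\Rightarrow$ \eqref{dinf}. The equivalence of \eqref{dinf} and \eqref{dsquare} is immediate from the two-sided estimate \eqref{H34}: since $d_\infty(\mu_n,\mu) \le d_\square(\mu_n,\mu) \le 4\, d_\infty(\mu_n,\mu)$, one of these quantities tends to $0$ if and only if the other does. So only the interaction with the weak convergence $\mu_n \Rightarrow \mu$ requires real work.

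For the implication \eqref{dinf} $\Rightarrow$ \eqref{dmeasure}, write $F_n$ and $F$ for the two-dimensional distribution functions of $\mu_n$ and $\mu$. If $\|F_n - F\|_\infty \to 0$, then $F_n(x,y) \to F(x,y)$ at every point $(x,y)$, in particular at every continuity point of $F$; combined with the obvious tightness of $(\mu_n)$ (all the measures are supported in $[0,1]\times[0,1]$), the classical criterion for weak convergence via distribution functions yields $\mu_n \Rightarrow \mu$.

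The substantive direction is \eqref{dmeasure} $\Rightarrow$ \eqref{dinf}, and here I would use two ingredients. First, pointwise convergence: since $\mu$ is a generalized permuton, its marginals $F_x$ and $F_y$ are continuous, hence $\mu$ puts zero mass on every horizontal and every vertical line, so $\mu$ gives zero mass to the topological boundary of each rectangle $[0,x]\times[0,y]$; by the portmanteau theorem, $F_n(x,y) = \mu_n([0,x]\times[0,y]) \to \mu([0,x]\times[0,y]) = F(x,y)$ for every $(x,y) \in [0,1]^2$. Second, uniform equicontinuity of $(F_n)$: from $\mu_n \Rightarrow \mu$ the first marginals converge, which forces $\lambda_n \to \lambda \in (0,1]$, so $\lambda_n \ge \underline\lambda > 0$ for all large $n$; then \eqref{H7+} applied to each $F_n$ (whose marginals have slope at most $1/\lambda_n$) gives $|F_n(x_2,y_2) - F_n(x_1,y_1)| \le \frac{1}{\underline\lambda}(|x_2 - x_1| + |y_2 - y_1|)$, a bound uniform in $n$. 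A pointwise convergent sequence of uniformly equicontinuous functions on the compact square $[0,1]^2$ converges uniformly (a standard $\varepsilon/3$ argument over a finite $\delta$-net), so $d_\infty(\mu_n,\mu) = \|F_n - F\|_\infty \to 0$, which closes the cycle.

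The step I expect to be the main obstacle is exactly this last one: upgrading the ``pointwise at continuity points'' information supplied by weak convergence to genuine uniform convergence of the distribution functions (a two-dimensional analogue of P\'olya's theorem). The equicontinuity that comes for free from the defining bounded-slope property of generalized permutons via \eqref{H7+}, together with the uniform lower bound on the parameters $\lambda_n$, is precisely what makes this upgrade possible; without such a uniform bound the implication would break, which is why the standing hypothesis $\underline\lambda \le \lambda_n$ (or, equivalently, its derivation from $\mu_n \Rightarrow \mu$) is used here.
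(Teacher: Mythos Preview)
Your argument is correct and follows essentially the same route as the paper's own proof: the equivalence of \eqref{dinf} and \eqref{dsquare} via \eqref{H34}, and the equivalence with \eqref{dmeasure} via the argument of Lemma~2.1 in \cite{Hoppen}, which is precisely the pointwise-plus-equicontinuity upgrade you spell out. The paper merely cites \cite{Hoppen} and notes that \eqref{H7+} (hence non-degeneracy of $\mu$, i.e.\ $\lambda>0$) is needed for \eqref{dmeasure} $\Rightarrow$ \eqref{dinf}; you supply those details explicitly, so your write-up is in fact more self-contained.
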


\begin{proof}
The equivalence of \eqref{dinf} and \eqref{dsquare} is a consequence of the inequalities
\eqref{H34}.
The equivalence of \eqref{dinf} and \eqref{dmeasure} can be proved by repeating the argument in
the first part of the proof of Lemma 2.1 in \cite{Hoppen}.

We mention that we need that $\mu$ is non-degenerated, 
that is the measure $\mu$ is not concentrated on the vertical axis, only in the implications 
 \eqref{dmeasure} $\rightarrow$ \eqref{dinf} and
\eqref{dmeasure} $\rightarrow$ \eqref{dsquare}, as in those cases we need \eqref{H7+}.
\end{proof}

\begin{definition}  \label{deftconv}
Let $\mu$  and $\mu_n$, $n=1,2, \dots$ be  generalized permutons.
Let $F_x$ and $F_y$ be the marginal distribution functions of $\mu$.
Let $F^{(n)}_x$ and $F^{(n)}_y$ be the marginal distribution functions of $\mu_n$, $n=1,2, \dots$.
We say that the sequence $\mu_n$, $n=1,2, \dots$ converges to $\mu$  in the sense of permutation 
densities, if for any permutation $\tau\in \PD$ we have
$$
\lim_{n\to \infty} t(\tau, \mu_n) = t(\tau, \mu),
$$
and $F^{(n)}_x \Rightarrow F_x$, $F^{(n)}_y \Rightarrow F_y$ as $n\to \infty$.
For this convergence we shall use the notation $\mu_n \xrightarrow[]{t} \mu$.
\end{definition}

\begin{lem}  \label{H5.3}
Let $\mu$  and $\mu_n$, $n=1,2, \dots$ be  generalized permutons.
Then
$\mu_n \xrightarrow[]{t} \mu$ and $\mu_n \Rightarrow \mu$ are equivalent.
\end{lem}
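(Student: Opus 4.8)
The plan is to prove the two implications $\mu_n \Rightarrow \mu \;\Longrightarrow\; \mu_n \xrightarrow{t} \mu$ and $\mu_n \xrightarrow{t} \mu \;\Longrightarrow\; \mu_n \Rightarrow \mu$ separately, using Lemma \ref{lem41} and Lemma \ref{H4.2} as the main tools.

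\textbf{Direction $\mu_n \Rightarrow \mu \Longrightarrow \mu_n \xrightarrow{t} \mu$.} Here I would argue that weak convergence of the two-dimensional measures forces both ingredients of Definition \ref{deftconv}. First, weak convergence $\mu_n \Rightarrow \mu$ implies weak convergence of the marginals, so $F^{(n)}_x \Rightarrow F_x$ and $F^{(n)}_y \Rightarrow F_y$ are immediate (projections are continuous). Second, I must show $t(\tau,\mu_n) \to t(\tau,\mu)$ for every fixed $\tau \in \PD_k$. By Lemma \ref{lem41}, weak convergence is equivalent to $d_\square(\mu_n,\mu)\to 0$ and to $d_\infty(\mu_n,\mu)\to 0$; hence the two-dimensional distribution functions converge uniformly and the marginal quantiles of $\mu_n$ converge to those of $\mu$ (using continuity/strict monotonicity of $F_y$, which holds since $F_y$ is continuous and $F_x$ is a fixed affine function). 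The density $t(\tau,\mu) = \P(\tau = \mu^{(k)})$ is the probability that an i.i.d.\ sample of size $k$ from $\mu$ induces the pattern $\tau$; this event can be written in terms of strict inequalities among the sample coordinates, so its probability is a continuous functional of the measure $\mu$ at any non-degenerate generalized permuton (the boundary of the defining region has $\mu^{\otimes k}$-measure zero because the marginals are continuous). Therefore $\mu_n \Rightarrow \mu$ gives $\mu_n^{\otimes k} \Rightarrow \mu^{\otimes k}$ and hence $t(\tau,\mu_n)\to t(\tau,\mu)$. Combining, $\mu_n \xrightarrow{t}\mu$.

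\textbf{Direction $\mu_n \xrightarrow{t} \mu \Longrightarrow \mu_n \Rightarrow \mu$.} This is the substantive direction and I expect it to be the main obstacle. The idea is the standard ``sampling'' argument from \cite{Hoppen}, adapted via Lemma \ref{H4.2}. Fix $\varepsilon>0$ and choose $k$ large enough that $16 k^{-1/4} < \varepsilon$ and $\tfrac12 e^{-\sqrt k} < \tfrac13$, and also so large that the convergence of marginals is ``felt'' at the level of $k$-subdivisions. By Lemma \ref{H4.2}, for each fixed $\nu \in \{\mu\}\cup\{\mu_n\}$ the random generalized permuton $\sigma(k,\nu)$ satisfies $d_\square(\nu,\sigma(k,\nu)) \le 16k^{-1/4}$ with probability $\ge 1 - \tfrac12 e^{-\sqrt k}$. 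Now $\sigma(k,\nu) = \nu_\sigma$ where $\sigma = \nu^{(k)}$ is distributed according to the densities $t(\cdot,\nu)$ on $\PD_k$, and $\nu_\sigma$ is built from $\sigma$ together with the marginal quantiles of $\nu$ as in Definition \ref{def3}. Since $t(\tau,\mu_n)\to t(\tau,\mu)$ for every $\tau\in\PD_k$ and $\PD_k$ is finite, the distributions of $\mu_n^{(k)}$ converge to that of $\mu^{(k)}$; and since $F^{(n)}_x\Rightarrow F_x$, $F^{(n)}_y\Rightarrow F_y$, the quantiles of $\mu_n$ converge to those of $\mu$, so the (finitely many possible) distribution functions $F_\sigma$ built from $\mu_n$ converge uniformly to those built from $\mu$. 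Hence one can couple $\sigma(k,\mu_n)$ and $\sigma(k,\mu)$ so that for $n$ large they agree as permutations with probability close to $1$ and, conditionally on agreeing, their two-dimensional distribution functions are within $\varepsilon$ in $\|\cdot\|_\infty$, hence within $O(\varepsilon)$ in $d_\square$. A triangle-inequality estimate
\[
d_\square(\mu_n,\mu) \le d_\square(\mu_n,\sigma(k,\mu_n)) + d_\square(\sigma(k,\mu_n),\sigma(k,\mu)) + d_\square(\sigma(k,\mu),\mu)
\]
then shows that on an event of probability $\ge 1 - 2\cdot\tfrac12 e^{-\sqrt k} - o(1) > 0$ the left-hand side is $\le 16k^{-1/4} + O(\varepsilon) + 16k^{-1/4} = O(\varepsilon)$. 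Since $d_\square(\mu_n,\mu)$ is deterministic, this forces $d_\square(\mu_n,\mu) = O(\varepsilon)$ for all large $n$, and letting $\varepsilon\downarrow 0$ gives $d_\square(\mu_n,\mu)\to 0$, which by Lemma \ref{lem41} is equivalent to $\mu_n\Rightarrow\mu$.

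\textbf{Main obstacle.} The delicate point is making the coupling of $\sigma(k,\mu_n)$ and $\sigma(k,\mu)$ precise: one needs that for $n$ large the permutation-density vectors $(t(\tau,\mu_n))_{\tau\in\PD_k}$ are so close to $(t(\tau,\mu))_{\tau\in\PD_k}$ that the total variation distance between $\mu_n^{(k)}$ and $\mu^{(k)}$ tends to $0$, and simultaneously that the marginal quantiles of $\mu_n$ converge to those of $\mu$ (which requires $F_x, F_y$ to be genuinely increasing at the relevant quantiles — true here because $F_x$ is affine and $F_y$ is continuous with the stated Lipschitz/slope control, and in the argument we only need $\mu$ non-degenerate). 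Handling the degenerate/limit bookkeeping — e.g.\ the possibility that some quantile $x_i$ or $y_j$ of $\mu$ is not uniquely located — is managed exactly as in \cite{Hoppen}, using that $F_x$ is strictly increasing on $[0,\lambda]$ and that ties among the continuous $y$-quantiles occur on a null set; I would simply invoke the corresponding step of the proof of Lemma 2.1 and Theorem 1.6 of \cite{Hoppen} with the obvious modifications for the parameter $\lambda$.
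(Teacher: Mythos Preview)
Your direction (i) is correct and essentially matches the paper: both pass to the product $\mu_n^{\otimes k}\Rightarrow\mu^{\otimes k}$ and use that the boundary of the pattern event $A_\tau\subset[0,1]^{2k}$ has $\mu^{\otimes k}$-measure zero because the marginals of $\mu$ are continuous.

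For direction (ii) your route differs from the paper's and contains a genuine gap. The paper does \emph{not} couple $\sigma(k,\mu_n)$ with $\sigma(k,\mu)$ via Lemma~\ref{H4.2}. Instead it runs a compactness--uniqueness argument: the family $(\mu_n)$ is tight on $[0,1]^2$, so any subsequence has a further subsequence with $\mu_{n''}\Rightarrow\tilde\mu$; the assumed marginal convergence forces $\tilde\mu$ to be a generalized permuton with the same marginals as $\mu$; direction (i) applied to this sub-subsequence gives $t(\tau,\tilde\mu)=\lim t(\tau,\mu_{n''})=t(\tau,\mu)$ for every $\tau$; and Proposition~\ref{HLemma5.1} then yields $\tilde\mu=\mu$. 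Since every subsequence has a sub-subsequence converging to $\mu$, one concludes $\mu_n\Rightarrow\mu$. Lemma~\ref{H4.2} enters only indirectly, through the proof of Proposition~\ref{HLemma5.1}.

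The gap in your coupling argument is the step ``the marginal quantiles of $\mu_n$ converge to those of $\mu$''. The condition $F_y(b)-F_y(a)\le(b-a)/\lambda$ is an \emph{upper} bound on the slope of $F_y$; it does not rule out flat stretches. If $F_y$ is constant at level $j/k$ on an interval $[a,b]$, then $y_j=\sup\{y:F_y(y)=j/k\}=b$, while $y_j^{(n)}$ may accumulate anywhere in $[a,b]$, and in that case $d_\square\bigl((\mu_n)_\sigma,\mu_\sigma\bigr)$ stays bounded away from $0$ even for the same permutation $\sigma$ (the piecewise-linear $y$-marginals of $(\mu_n)_\sigma$ and $\mu_\sigma$ can differ by a fixed amount on $[a,b]$). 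Your appeal to \cite{Hoppen} does not rescue this: there both marginals are uniform on $[0,1]$, so the quantile ambiguity never occurs. Nor can one simply pick $k$ avoiding all flat levels of $F_y$, since those levels may include e.g.\ $1/m$ for every $m\ge2$, making every $k\ge2$ bad. The paper's subsequence argument sidesteps the issue entirely because it never needs to compare $\sigma(k,\mu_n)$ with $\sigma(k,\mu)$.
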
  

\begin{proof}
(i)
Let $\mu_n \Rightarrow \mu$.
Then for the marginal distribution functions we have $F^{(n)}_x \Rightarrow F_x$ and 
$F^{(n)}_y \Rightarrow F_y$ as $n\to \infty$.
Let $\tau\in \PD_k$ be a permutation of order $k$.
We are going to show that
\begin{equation}  \label{X}
\lim_{n\to \infty} t(\tau, \mu_n) = t(\tau, \mu).
\end{equation}
Let $(\xi_i, \eta_i)$, $i=1, \dots , k$, be a sample from the distribution $\mu$. 
For any fixed positive integer $n$,
let $(\xi^{(n)}_i, \eta^{(n)}_i)$, $i=1, \dots , k$, be a sample from the distribution $\mu_n$.
With probability $1$, the elements of each sample are different.
 $\mu_n \Rightarrow \mu$ implies that the distribution of the $2k$-dimensional vector
 $$
 (\xi^{(n)}_1, \eta^{(n)}_1,\dots , \xi^{(n)}_k, \eta^{(n)}_k)
 $$
 converges to that of
 $$
 (\xi_1, \eta_1,\dots , \xi_k, \eta_k).
 $$
 For any vector $(x_1, \dots ,x_k)$, we denote its ordered version by
 $x^{\ast}_1 < \dots < x^{\ast}_k$.
 
 Let $A_\tau\subseteq [0,1]^{2k}$ be the set of vectors
 $(x_1, y_1, \dots ,x_k, y_k)$ such that
 $x^{\ast}_i = x_l$ implies $y^{\ast}_{\tau(i)} = y_l$ for each $i=1,\dots , k$.
 Then 
 \begin{equation}  \label{H47}
 t(\tau, \mu_n) = 
 \P^{(n)} \left( \left(\xi^{(n)}_i, \eta^{(n)}_i \right)_{i=1}^{k} \in A_\tau \right) =
 \mu_n^{k} (A_\tau) ,
 \end{equation} 
 and 
\begin{equation}  \label{H47+}
 t(\tau, \mu) = 
 \P\left( \left(\xi_i, \eta_i \right)_{i=1}^{k} \in A_\tau \right) =
 \mu^{k} (A_\tau) ,
 \end{equation}  
 where $\mu^{k} $ and $\mu_n^{k} $ denotes the $k$-fold product measures.
 Now, we show that the boundary of $A_\tau$ is of $\P$-measure zero.
 To this end, observe that on the boundary of $A_\tau$,
 two coordinates must be equal, i.e. $x_i=x_j$ or $y_i=y_j$ for some $i\ne j$.
 But the marginals of $\mu$ are continuous, so it has probability $0$.
 So $\mu_n \Rightarrow \mu_n$ implies $\mu_n^{k} (A_\tau) \to \mu^{k} (A_\tau)$ as 
 $n \to \infty$.
 So \eqref{X} is true.
 
 (ii)
 To prove the other direction, we apply Theorem 3.3 of Chapter I of \cite{Bill}.
 I.e. for probability measures on a metric space we have:
 $\mu_n \Rightarrow \mu$ if and only if any subsequence $\mu_{n'}$ contains a further
 subsequence $\mu_{n''}$ such that $\mu_{n''} \Rightarrow \mu$.
In our case  $\mu_n$, $n=1, 2, \dots$, is tight, so any subsequence $\mu_{n'}$ contains a further
 subsequence $\mu_{n''}$ such that $\mu_{n''} \Rightarrow \tilde{\mu}$.
 By our assumption, the marginal distributions of $\mu_{n''}$ converge to the marginal distributions of
 $\mu$, but they converge also to those of $\tilde{\mu}$.
 It implies, that $\tilde{\mu}$ is a generalized permuton.
 Moreover, by the previous part of the theorem, 
 $\lim_{n\to \infty} t(\tau, \mu_{n''}) = t(\tau, \tilde{\mu})$.
 But, by the assumptions, 
 $\lim_{n\to \infty} t(\tau, \mu_{n''}) = t(\tau, {\mu})$.
 By Proposition \ref{HLemma5.1}, $\mu = \tilde{\mu}$.
 So $\mu_n \Rightarrow \mu$.
\end{proof}

\begin{definition}  \label{deftPermDens}
Let $\nu\in \VD_{n,m}$ be an $(n,m)$-permutation and let $\tau\in \PD_k$ be a permutation, $k\le m \le n$.
Let $\Gamma(\tau, \nu)$ denote the number of those vectors 
$(x_1, x_2, \dots , x_k) \in [m]^k$ for which $x_1 < x_2 < \dots < x_k$ and such that
$\nu(x_i) < \nu(x_j)$ if and only if $\tau(i) < \tau(j)$ for any $i,j$.
The density of $\tau$ in $\nu$ is defined as
$$
t(\tau, \nu) = \frac{\Gamma(\tau, \nu) }{ \binom{m}{k} }.
$$
\end{definition}

\begin{definition}  \label{deftconvNU}
Let $m_n \to \infty$ be a sequence of positive integers, $m_n\le n$ for any $n$
and assume that $\lim_{n\to\infty} \frac{m_n}{n} = \lambda >0$.
Let $\nu_n\in \VD_{n,m_n}$ be an $(n,m)$-permutation for each $n$.
Let $F^{(n)}_x$ and $F^{(n)}_y$ be the marginal distribution functions of $\nu_n$, $n=1,2, \dots$.
We say that the sequence $\nu_n$, $n=1,2, \dots$ is convergent, if for any permutation $\tau\in \PD$, 
the sequence
$t(\tau, \nu_n)$ is convergent, 
and the sequences of functions $F^{(n)}_x$ and $F^{(n)}_y$ are convergent as $n\to \infty$.
\end{definition}

\begin{lem}  \label{H3.5}
Let $\nu\in \VD_{n,m}$ be an $(n,m)$-permutation and let $\mu\in \WD_{n,m}$ be the 
generalized permuton corresponding to it.
Let $\tau\in \PD_{k}$ be a permutation, $k\le m$.
Then
$$
|t(\tau, \nu) - t(\tau, \mu) | \le \frac{1}{m} \binom{k}{2} .
$$
\end{lem}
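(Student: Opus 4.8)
The plan is to read off $t(\tau,\mu)$ from its probabilistic definition and compare the sampling mechanism for $\mu=\mu_\nu$ with the one implicit in the density $t(\tau,\nu)$ of Definition \ref{deftPermDens}. By Definition \ref{W} the density $f_\nu$ is the constant $n^2/m$ on the $m$ squares $S_{h,\nu(h)}$, $h\in[m]$, and $0$ elsewhere. Hence drawing a point $(\xi,\eta)$ from $\mu$ is the same as first drawing a column index $h$ uniformly from $[m]$ and then drawing $(\xi,\eta)$ uniformly in $S_{h,\nu(h)}$. Consequently the i.i.d.\ sample $(\xi_1,\eta_1),\dots,(\xi_k,\eta_k)$ underlying $\mu^{(k)}$ (Definition \ref{def4}) can be generated from i.i.d.\ uniform column indices $h_1,\dots,h_k\in[m]$ together with independent uniform points in the corresponding squares; note also that, within each square, the sampled coordinates are a.s.\ distinct, so $\mu^{(k)}$ is a.s.\ well defined.

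The core step is to introduce the ``good'' event $G=\{h_1,\dots,h_k\text{ pairwise distinct}\}$. On $G$ the $k$ chosen squares lie in pairwise distinct columns and, since $\nu$ is injective, in pairwise distinct rows; therefore the order of the $\xi$-coordinates agrees with the order of the $h_i$, and the order of the $\eta$-coordinates agrees with the order of the $\nu(h_i)$. Writing $h_{(1)}<\dots<h_{(k)}$ for the sorted indices, this shows that on $G$ the $\mu$-random permutation $\mu^{(k)}$ is exactly the pattern induced by $\nu$ on $\{h_{(1)},\dots,h_{(k)}\}$, i.e.\ $\mu^{(k)}=\tau$ on $G$ iff $\nu(h_{(i)})<\nu(h_{(j)})\iff\tau(i)<\tau(j)$ for all $i,j$. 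Moreover, conditioning i.i.d.\ uniform variables on being pairwise distinct gives the uniform law on ordered $k$-tuples of distinct elements of $[m]$, hence (after sorting) the uniform law on $k$-element subsets of $[m]$. Comparing with Definition \ref{deftPermDens} yields $\P(\mu^{(k)}=\tau\mid G)=\Gamma(\tau,\nu)/\binom{m}{k}=t(\tau,\nu)$. The main obstacle is precisely to carry out this identification cleanly: it rests on the injectivity of $\nu$, the a.s.\ distinctness of the sampled coordinates, and the symmetry argument for the conditional law of the subset — all routine, but worth stating explicitly.

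Finally, I would write $t(\tau,\mu)=\P(\mu^{(k)}=\tau)=t(\tau,\nu)\,\P(G)+\P(\mu^{(k)}=\tau,\,G^c)$, so that
\[
t(\tau,\mu)-t(\tau,\nu)=-t(\tau,\nu)\,\P(G^c)+\P(\mu^{(k)}=\tau,\,G^c).
\]
Since $0\le t(\tau,\nu)\le 1$ and $0\le \P(\mu^{(k)}=\tau,\,G^c)\le\P(G^c)$, the two summands lie in $[-\P(G^c),\P(G^c)]$ and have opposite signs, whence $|t(\tau,\mu)-t(\tau,\nu)|\le\P(G^c)$. A union bound over pairs then closes the argument:
\[
\P(G^c)\le\sum_{1\le i<j\le k}\P(h_i=h_j)=\binom{k}{2}\frac1m,
\]
which is the asserted bound.
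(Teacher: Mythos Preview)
Your argument is correct and is essentially the paper's own proof: your event $G$ coincides with the paper's event $B$ (each square $S_{h,\nu(h)}$ contains at most one sample point), your identity $\P(\mu^{(k)}=\tau\mid G)=t(\tau,\nu)$ is the paper's $\P(A\mid B)=t(\tau,\nu)$, and your final inequality $|t(\tau,\mu)-t(\tau,\nu)|\le\P(G^c)$ is exactly the bound $|\P(A)-\P(A\mid B)|\le\P(\bar B)$ that the paper quotes from \cite{Hoppen}. The only difference is that you spell out the coupling via column indices and prove the conditioning inequality by hand, whereas the paper states these steps and cites the reference.
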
 

\begin{proof}
Let $\tau\in \PD_{k}$ be fixed.
Let $(\xi_i, \eta_i)$, $i=1, \dots , k$, be a sample from the distribution $\mu$.
Let $A$ be the event that $\xi^{\ast}_i = \xi_l$ and 
$\eta^{\ast}_{\tau(i)} = \eta_l$ for each $i=1,\dots , k$.
 Then 
$\P(A) = t(\tau, \mu)$.
Let $B$ be the event that any square $S_{h,l}$ from Definition \ref{W}
 contains at most one element $(\xi_i, \eta_i)$.
Then $\P(A|B) = t(\tau, \nu)$.
We also see that 
$\P(\bar{B}) \le \frac{1}{m} \binom{k}{2}$.
Using (28) of \cite{Hoppen},
$$
|t(\tau, \nu) - t(\tau, \mu) | = | \P(A)- \P(A|B)| \le \P(\bar{B}) \le \frac{1}{m} \binom{k}{2} .
$$
This completes the proof.
\end{proof}

\begin{prop}  \label{F}
Let $m_n \to \infty$ be a sequence of positive integers, $m_n\le n$ for any $n$
and assume that $\lim_{n\to\infty} \frac{m_n}{n} = \lambda >0$.
For each $n$, let $\nu_n\in \VD_{n,m_n}$ be an $(n,m_n)$-permutation and let $\mu_n\in \WD_{n,m_n}$
be the generalized permuton corresponding to it.
The sequence
$\nu_n$ is convergent if and only if 
$\mu_n \xrightarrow[]{t} \mu$ for some $\lambda$-permuton $\mu$.
The $\lambda$-permuton $\mu$ can be considered as the limit of the sequence of the $(n,m)$-permutations  $\nu_n$.
So we shall use the notation $\nu_n \xrightarrow[]{t} \mu$.
\end{prop}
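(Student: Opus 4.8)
The plan is to prove the two implications separately. Throughout, recall that, by the definition preceding Definition \ref{deftconvNU}, the marginal distribution functions of an $(n,m)$-permutation $\nu_n$ are taken to be those of the corresponding generalized permuton $\mu_n$; hence the requirements "$F^{(n)}_x,F^{(n)}_y$ are convergent" in Definitions \ref{deftconvNU} and \ref{deftconv} refer to the same sequences of functions, and the only substantive point is the behaviour of the permutation densities. Note also that $m_n/n\to\lambda>0$ forces $m_n\to\infty$, which is what makes Lemma \ref{H3.5} effective.

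For the easy direction, assume $\mu_n\xrightarrow[]{t}\mu$ for some $\lambda$-permuton $\mu$. Fix $\tau\in\PD_k$. For every $n$ with $m_n\ge k$, Lemma \ref{H3.5} gives $|t(\tau,\nu_n)-t(\tau,\mu_n)|\le \frac1{m_n}\binom{k}{2}$, and the right-hand side tends to $0$; since $t(\tau,\mu_n)\to t(\tau,\mu)$ by hypothesis, we obtain $t(\tau,\nu_n)\to t(\tau,\mu)$, so in particular $t(\tau,\nu_n)$ converges. The marginal distribution functions of $\nu_n$ are those of $\mu_n$, which converge by the definition of $\xrightarrow[]{t}$. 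Hence $\nu_n$ is convergent in the sense of Definition \ref{deftconvNU}.

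For the converse, assume $\nu_n$ is convergent. By Lemma \ref{H3.5} again, $|t(\tau,\nu_n)-t(\tau,\mu_n)|\to0$ for every $\tau$, so $t(\tau,\mu_n)$ converges, to the same limit as $t(\tau,\nu_n)$; moreover $F^{(n)}_x\Rightarrow F_x$ and $F^{(n)}_y\Rightarrow F_y$ for some distribution functions $F_x,F_y$. Since $F^{(n)}_x$ is uniform on $[0,\lambda_n]$ with $\lambda_n:=m_n/n\to\lambda$, the limit $F_x$ is uniform on $[0,\lambda]$; and since each $F^{(n)}_y$ is a broken line with slopes at most $1/\lambda_n$ and $\lambda_n$ is eventually bounded below by a positive constant, the limit $F_y$ is continuous and satisfies $F_y(b)-F_y(a)\le(b-a)/\lambda$ for $0\le a<b\le1$. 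The family $\{\mu_n\}$ is tight, all these measures being supported in $[0,1]^2$, so any subsequence $\mu_{n'}$ contains a further subsequence $\mu_{n''}$ with $\mu_{n''}\Rightarrow\tilde\mu$. By Lemma \ref{H5.3}, $\mu_{n''}\xrightarrow[]{t}\tilde\mu$: the marginals of $\tilde\mu$ are $F_x$ and $F_y$, so $\tilde\mu\in\WD_{\lambda}$, and $t(\tau,\tilde\mu)=\lim_{n''}t(\tau,\mu_{n''})=\lim_n t(\tau,\mu_n)$, a value independent of the chosen subsequence. By Proposition \ref{HLemma5.1}, $\tilde\mu$ is uniquely determined by $F_x$, $F_y$ and these density values, so every subsequential weak limit of $(\mu_n)$ equals one and the same $\lambda$-permuton $\mu$. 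By the subsequence criterion for weak convergence (Theorem 3.3, Chapter I of \cite{Bill}, already used in the proof of Lemma \ref{H5.3}), $\mu_n\Rightarrow\mu$, and hence $\mu_n\xrightarrow[]{t}\mu$ by Lemma \ref{H5.3}. This also contains Theorem \ref{main} as the "only if" half.

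The main obstacle is the converse implication, and inside it two points: first, identifying the common subsequential limit as a genuine $\lambda$-permuton, which is where $\lambda_n\to\lambda>0$ is used to fix the shape of the limiting marginals (uniform on $[0,\lambda]$, and Lipschitz with constant $1/\lambda$); second, promoting subsequential weak convergence to convergence of the whole sequence, which is precisely where the uniqueness statement of Proposition \ref{HLemma5.1} enters, together with the observation that the limits of $t(\tau,\mu_n)$ do not depend on the subsequence. The forward implication is routine once Lemma \ref{H3.5} is invoked.
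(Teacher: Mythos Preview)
Your proof is correct and follows essentially the same route as the paper: identify the marginals of $\nu_n$ and $\mu_n$, use Lemma \ref{H3.5} to transfer convergence of permutation densities, then invoke tightness, Proposition \ref{HLemma5.1}, and Lemma \ref{H5.3} to pin down the unique limit via the subsequence criterion. Your version is in fact more explicit than the paper's in verifying that the subsequential weak limit is a genuine $\lambda$-permuton (the paper simply asserts this), but otherwise the arguments coincide.
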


\begin{proof}
The marginal distribution functions of $\nu_n$ are the same as those of $\mu_n$.
So the limiting behaviour of the marginal distribution functions are the same.
From Lemma \ref{H3.5}, we see that the limits of
$t(\tau, \nu_n)$ and  $t(\tau, \mu_n)$ are the same for any $\tau\in \PD_{k}$.
Now, we show that the limit $\mu$ exists.
By tightness, any subsequence $\mu_{n'} $ contains a further subsequence 
$\mu_{n''} $ converging to some $\lambda$-permuton $\mu$, i.e. $\mu_{n''} \Rightarrow \mu$.
The marginal distributions of $\mu$ and its permutation densities are uniquely determined by the
sequence $\nu_n$.
So, by Proposition \ref{HLemma5.1}, $\mu$ is uniquely determined and it does not depend on the
particular subsequence.
So $\mu_{n} \Rightarrow \mu$.
So, by Lemma \ref{H5.3}, $\mu_n \xrightarrow[]{t} \mu$.
\end{proof}

\begin{proof}[Proof of Theorem \ref{main}]
It is a simple consequence of Proposition \ref{F}
\end{proof}
%

%%%%%%%%%%%%%%%%%%%%%%%%%%%%%%%%%%%%%%%%%%%%%%%%%%%%%%%%%%%%%%%%%%%%%%%%%%%%%%%%%%
%
\begin{proof}[Proof of Theorem \ref{invMain}]
%%%%%%%%%%%%%%
Let $\mu$ be a $\lambda$-permuton, let $F$ be its distribution function and let $F_x$ and $F_y$ be the marginal
distribution functions.
As $\lambda \in (0, 1]$, we can find a sequence of positive integers $M_N$ with $M_N \le N$,  $N= 1, 2, \dots $
such that $\frac{M_N}{N} \to \lambda$ as $N \to \infty$.
Therefore, a simple argument shows, that we can approximate $\mu$ by $\frac{M_N}{N}$-permutons.
So, it is enough to prove our theorem for $\lambda= \frac{M}{N}$, where $M$ and $N$ are positive integers.

Let $x_i= \frac{i}{N}$ be the $\frac{i}{M}$-quantile of $F_x$, and let $y_i$ be the $\frac{i}{M}$-quantile of $F_y$, $i=0,1, \dots, M$.
We can choose $x_0=y_0= 0$, $x_M= \lambda$ and  $y_M= 1$.
Let
$$
R_{i,j} = (x_{i-1}, x_i] \times (y_{j-1}, y_j], \qquad i,j= 1,2,\dots , M.
$$
We call the rectangles
$$
G_{j} = (0, \lambda] \times (y_{j-1}, y_j] = \cup_{i=1}^M R_{i,j}, \qquad j= 1,2,\dots , M,
$$
gray stripes.
We see that $\mu(G_j) = \frac{1}{M}$, $j= 1,2,\dots , M$.

Now, we divide into parts the vertical axis using the points $0, \frac{1}{N}, \frac{2}{N}, \dots, 1$.
Choose the integers $l_1, \dots , l_M$ such that with notation
$\bar{Z}_j= \frac{l_j}{N}$ and $\underline{Z}_j= \frac{l_j-1}{N}$
the interval $( \underline{Z}_j , \bar{Z}_j]$ contains the quantile $y_j$ ($j= 1,2,\dots , M$).
As the slope of $F_y$ is at most $\frac{1}{\lambda} = \frac{N}{M}$, for different values of $j$, 
the intervals $( \underline{Z}_j , \bar{Z}_j]$ are disjoint.

Let us call the rectangles
$$
B_{j} = (0, \lambda] \times ( \underline{Z}_j , \bar{Z}_j] , \qquad j= 1,2,\dots , M,
$$
black stripes and the rectangles
$$
W_{j} = (0, \lambda] \times ( \bar{Z}_{j-1}, \underline{Z}_j] , \qquad j= 1,2,\dots , M,
$$
white stripes.
Some $W_j$ can be empty.
We see that the black stripes together with the white stripes offer a partition of $(0,\lambda]\times(0,1]$.
We shall approximate $\mu$ by another $\lambda$-permuton $\tilde{\mu}$ in the following way.
We shall transform the measure of any gray stripe $G_j$ to the corresponding black stripe $B_j$, such that
the measure of the white stripes become $0$.
We can do it by the following procedure.

Let the probability measure $\tilde{\mu}$ be uniformly distributed on the rectangle
$
(x_{i-1}, x_i] \times ( \underline{Z}_j , \bar{Z}_j] 
$
with
$$
\tilde{\mu} \left((x_{i-1}, x_i] \times ( \underline{Z}_j , \bar{Z}_j] \right) =
{\mu} \left((x_{i-1}, x_i] \times ( y_{j-1} , y_j] \right) 
$$
for any $i$ and $j$.
It defines $\tilde{\mu}$ on the black stripes.
Let the $\tilde{\mu}$ measure of any white stripe be $0$. 
Let $\tilde{F}$ be the distribution function of $\tilde{\mu}$, and let
$\tilde{F}_x$ and $\tilde{F}_y$ be the marginal distribution functions.
We see that $\tilde{F}_x$ is the uniform distribution on $(0,\lambda]$.
$\tilde{F}_y$ is the distribution function of a probability measure on $(0,1]$, it is
a continuous broken line, it is constant along any white stripe and it has slope $\frac{1}{\lambda} = \frac{N}{M}$ along any black stripe.
So $\tilde{\mu}$ is a $\lambda$-permuton.
We see that for the $\infty$-distance: $\| F - \tilde{F} \|_\infty  \le \frac{2}{M}$.

Now, we shall apply a transformation 
$$
L: (0, \lambda] \times ( 0,1] \to (0, \lambda] \times ( 0,\lambda],
$$
which is defined by the following procedure.
We remove all white stripes.
This way the black stripes will be shifted down and we obtain from $\tilde{\mu}$ 
a probability measure $\tilde{\mu}_\lambda$ on $(0, \lambda] \times ( 0,\lambda]$.
Both marginals of $\tilde{\mu}_\lambda$ will be uniform on $(0, \lambda] $.
Then, using the transformation $(x,y) \to (\frac{x}{\lambda}, \frac{y}{\lambda})$, we magnify 
the square $(0, \lambda] \times ( 0,\lambda]$ to the square $(0, 1] \times ( 0,1]$.
The image of the measure $\tilde{\mu}_\lambda$ will be the permuton $\tilde{\mu}_1$.
Now, applying the original result of \cite{Hoppen},
we can approximate $\tilde{\mu}_1$ by permutations $\sigma_k : [kM] \to [kM]$, $k=1,2,\dots$.
Let $\tilde{\sigma}_k$ denote the permuton corresponding to $\sigma_k $.
Then $d_\infty(\tilde{\mu}_1 , \tilde{\sigma}_k) \le \varepsilon_k$,
where $\varepsilon_k \to 0$ as $k\to \infty$.

Now, we perform backwards the previous transforms.
First, we reduce the size of the square $(0, 1] \times ( 0,1]$ by the transformation 
$(x,y) \to (\lambda x, \lambda y)$.
This transformation maps $\tilde{\sigma}_k$ to a probability $\beta_k$ on the square
$(0, \lambda] \times ( 0,\lambda]$ such that
$$
d_\infty(\tilde{\mu}_\lambda , \beta_k) = d_\infty(\tilde{\mu}_1 , \tilde{\sigma}_k) \le \varepsilon_k .
$$
The structure of $\beta_k$ is the following.
If we divide $(0, \lambda] \times ( 0,\lambda]$ into $(Mk)^2$ small squares of type 
$\left(\frac{i-1}{Nk}, \frac{i}{Nk} \right] \times \left(\frac{j-1}{Nk}, \frac{j}{Nk} \right]$,
then the $\beta_k$-measures of $Mk$ squares are $\frac{1}{Mk}$, and the 
$\beta_k$-measures of the remaining squares are $0$.
Moreover, the squares having $\frac{1}{Mk}$ measure represent a permutation of $[Mk]$.
Now, we perform the inverse of the transformation $L$.
That is, we insert back the white stripes between the black stripes so that the measure of any white stripe is 
defined as $0$.
This way the measure $\beta_k$ is transformed into a measure
$\tilde{\beta}_k \in \WD_{Nk, Mk}$.
That is $\tilde{\beta}_k $ is a $\lambda$-permuton which represents an $(Nk,Mk)$-permutation.
By the construction,
$d_\infty(\tilde{\mu}, \tilde{\beta}_k) = d_\infty(\tilde{\mu}_\lambda , \beta_k) \le \varepsilon_k $.
Therefore,
$$
d_\infty({\mu}, \tilde{\beta}_k) \le  
d_\infty(\mu, \tilde{\mu}) + d_\infty(\tilde{\mu}, \tilde{\beta}_k) \le \frac{2}{M} +\varepsilon_k ,
$$
which is arbitrarily small for appropriate $M$ and $k$.

\begin{figure}[h!] 
    \centering
    \begin{subfigure}[b]{0.45\textwidth}
        \includegraphics[width=\textwidth]{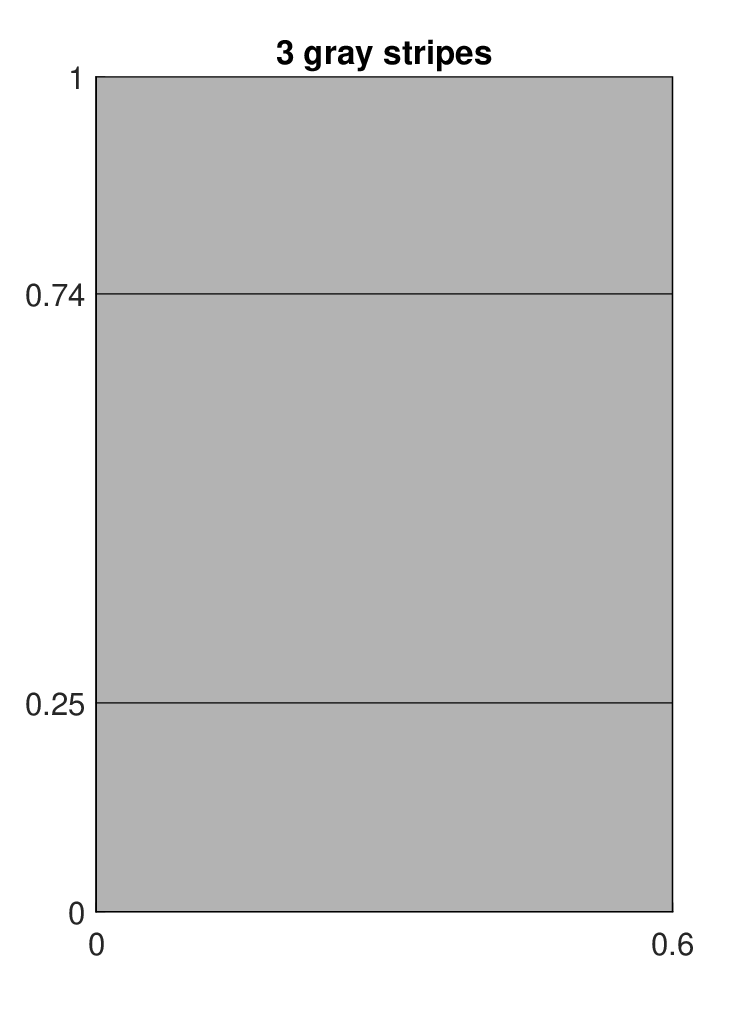}
        \caption{3 gray stripes}
        \label{fS1}
    \end{subfigure}
    \begin{subfigure}[b]{0.45\textwidth}
        \includegraphics[width=\textwidth]{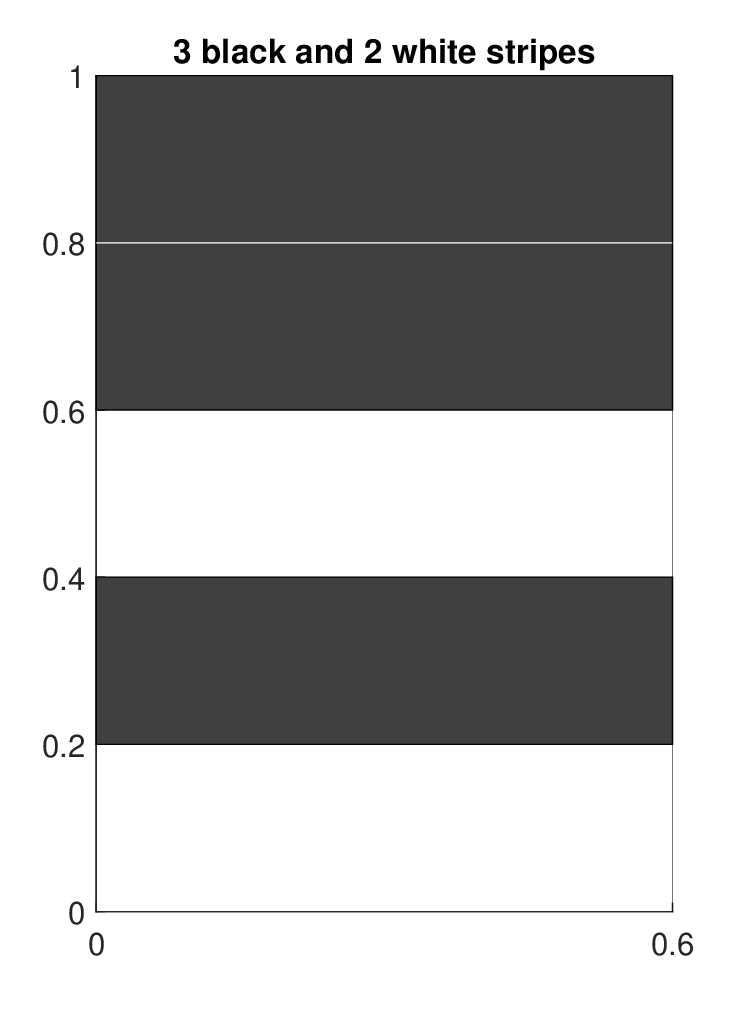}
        \caption{3 black and 2 white stripes}
        \label{fS2}
    \end{subfigure}
    \begin{subfigure}[b]{0.45\textwidth}
        \includegraphics[width=\textwidth]{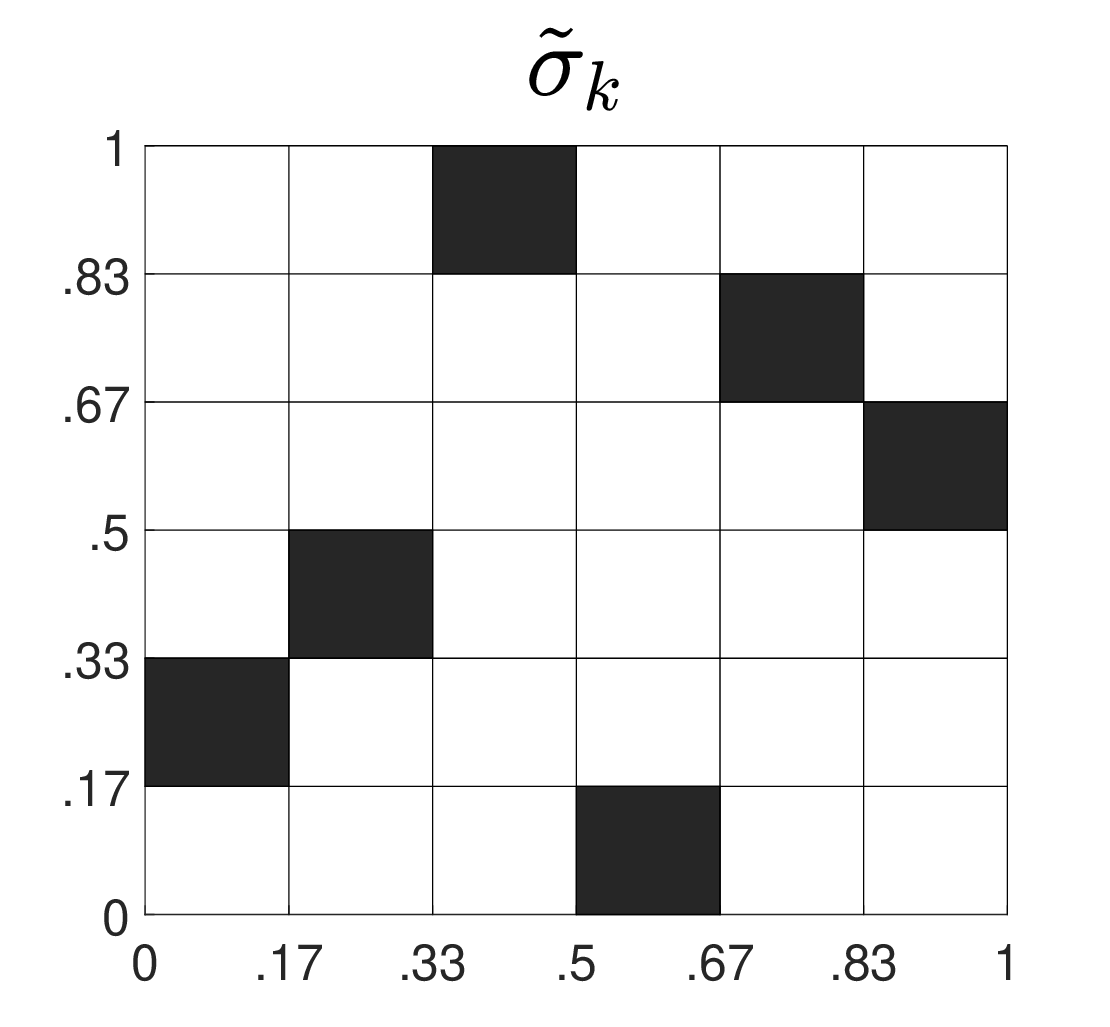}
        \caption{The permuton $\tilde{\sigma}_k$}
        \label{fS3}
    \end{subfigure}    
    \begin{subfigure}[b]{0.45\textwidth}
        \includegraphics[width=\textwidth]{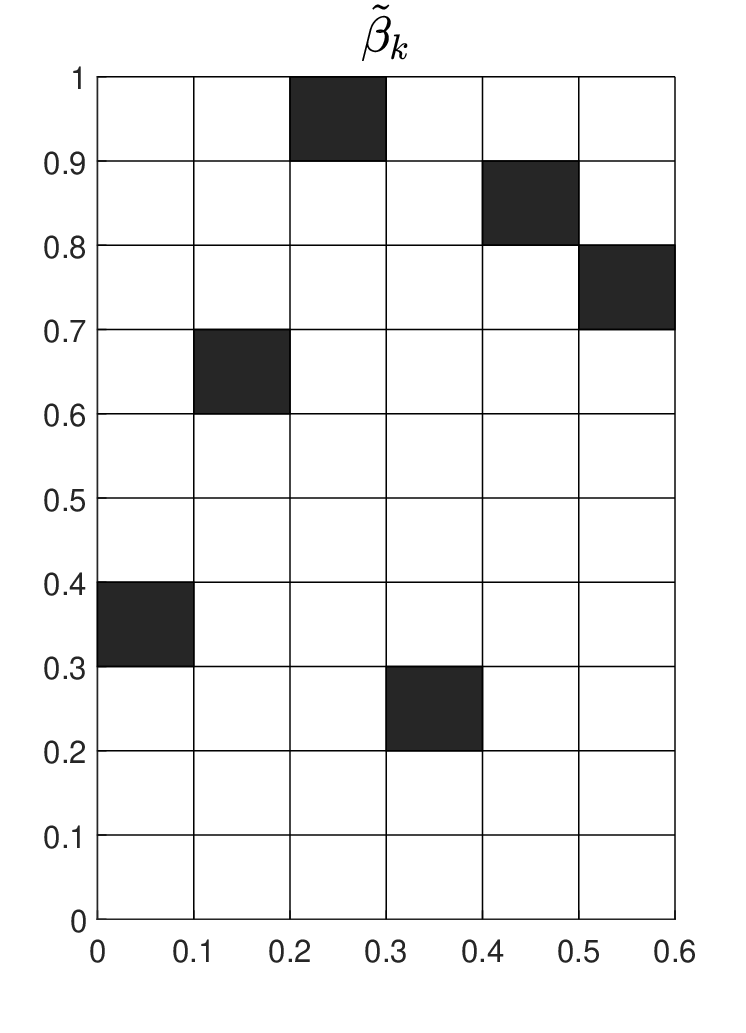}
        \caption{The generalized permuton $\tilde{\beta}_k$}
        \label{fS4}
    \end{subfigure}
    \caption{Four steps in the proof of Theorem \ref{invMain} for the  particular case given in Example \ref{exStripes}.} \label{figStripes}
\end{figure}

We see that the parameters of $\tilde{\beta}_k$ are $(Nk, Mk)$.
That is the sequence of the first parameter is $N, 2N, 3N,\dots$.
Here, we shall show, that the first parameter can run along all positive integers.
If $k$ is large enough, then we can interpolate between $\tilde{\beta}_k$ and $\tilde{\beta}_{k+1}$
by certain generalized permutons $\beta_{k,i} \in \WD_{Nk+i, Mk+j}$, $i=1,2,\dots , N-1$,
where the value of $j$ will be given by our next procedure.
Our starting measure is $\tilde{\beta}_{k+1} \in \WD_{Nk+N, Mk+M}$.
It can be visualized as a board containing $(Nk+N)(Mk+M)$ small squares arranged in 
$Nk+N$ rows and  $Mk+M$ columns.
$Mk+M$ small squares are black according to the corresponding
$(Nk+N,Mk+M)$-permutation
(to be black means that its measure is positive).
The remaining squares are white meaning that their measures are $0$.
Now, we remove $M$ black squares step by step.
When we remove a black square, then we delete its row and column, we distribute its measure
uniformly among the remaining black squares.
Finally, we transform the shape of the board 
to obtain again a generalized permuton.
We can see that the $d_\infty$ distance of the distribution function of the new generalized permuton and
the previous one is not greater than $\frac{3}{Mk}$.
As we should do this step $M$-times, the $d_\infty$ distances of the distribution functions of the obtained
 generalized permutons and the initial one is not greater than $\frac{3}{k}$.
After the above procedure, we remove step by step $N-M$ rows not containing black squares.
When we remove a row, then we reshape  the board to obtain again a generalized permuton.
We can see that the $d_\infty$ distance of the distribution function of the new generalized permuton and
the previous one is not greater that $\frac{2}{Mk}$.
As we should do this step $(N-M)$-times, the $d_\infty$ distances of the distribution functions of the obtained
 generalized permutons and the initial one is not greater than $\frac{2}{k}\left(\frac{N}{M} - 1 \right)$.
 Finally, if we add the previously obtained maximal distances, then we obtain 
 $$
 \frac{3}{k} +\frac{2}{k}\left(\frac{N}{M} - 1 \right) = \frac{1}{k}\left(\frac{2N}{M} +1 \right)
$$
which is arbitrarily small for $k$ being large enough.
\end{proof}

\begin{exmp}  \label{exStripes}
We present a simple example to visualize the proof of Theorem \ref{invMain}.
Let $N=5$ and $M=3$.
Assume that the $\frac{1}{3}$ quantile of $F_y$ is $0.25$ and its $\frac{2}{3}$ quantile is $0.74$.
So the three gray stripes are the stripes on subfigure \eqref{fS1} of Figure \ref{figStripes}.
As $N=5$, we divide the original rectangle into $5$ stripes.
So the three black and the two white stripes are shown on subfigure \eqref{fS2}.
The measure $\tilde{\mu}$ is concentrated on the black stripes.

Now, let $k=2$.
The usual (i.e. not generalized) permuton $\tilde{\sigma}_k$ is visualized on  subfigure \eqref{fS3}.
Then, we insert back the white stripes, so from $\tilde{\sigma}_k$ we obtain the generalized permuton
$\tilde{\beta}_k$.
The shape of $\tilde{\beta}_k$ is shown on subfigure \eqref{fS4}.
We can see that $\tilde{\beta}_k$ corresponds to the ordered selection 
$(4,7,10,3,9,8)$ out of the elements $\{1,2,3,4,5,6,7,8,9,10\}$.
\end{exmp}


\begin{thebibliography}{99}
\footnotesize

%Noga Alon, Colin Defant, Noah Kravitz,
%The runsort permuton,
%Advances in Applied Mathematics, Volume 139, 2022, 102361,
%ISSN 0196-8858,
%https://doi.org/10.1016/j.aam.2022.102361.
\bibitem{Alon}
{\sc Alon, N., Defant, C. and Kravitz, N.} (2022).
The runsort permuton.
{\em Advances in Applied Mathematics}, {\bf 139,} 102361. 
%https://doi.org/10.1016/j.aam.2022.102361.
%

% Frédérique Bassino. Mathilde Bouvel. Valentin Féray. Lucas Gerin. Adeline Pierrot. 
%"The Brownian limit of separable permutations." Ann. Probab. 46 (4) 2134 - 2189, July 2018. 
%%https://doi.org/10.1214/17-AOP1223 
%
\bibitem{Bassino} 
{\sc Bassino, F., Bouvel, M., Féray, V., Gerin, L. and Pierrot, A.} (2018).
The Brownian limit of separable permutations.
{\em Ann. Probab.}
{\bf 46/4,} 2134--2189.
%https://doi.org/10.1214/17-AOP1223 
%

\bibitem{Bill} 
{\sc Billingsley, P.} (1968).
{\em Convergence of Probability Measures}, 
Wiley, New York-London-Sydney.
%

\bibitem{Glebov} 
{\sc Glebov, R., Grzesik, A., Klimo\v sov\'a, T. and  Kr\'al', D.} (2015).
Finitely forcible graphons and permutons.
{\em J. Combinatorial Theory, Ser. B},
{\bf 110,} 112--135.
%ISSN 0095-8956,
%https://doi.org/10.1016/j.jctb.2014.07.007
%

\bibitem{Grubel} 
{\sc Gr\"ubel, R.} (2024).
Ranks, copulas, and permutons. 
{\em Metrika}, {\bf 87,} 155--182. 
%https://doi.org/10.1007/s00184-023-00908-2
%

\bibitem{Hoppen} 
{\sc Hoppen, C.,  Kohayakawa, Y.,  Moreira, C. G.,  R\'ath, B. and Sampaio, R. M.} (2013).
Limits of permutation sequences.
{\em J. Combinatorial Theory, Ser. B},
{\bf 103/1,} 93--113.
%ISSN 0095-8956,
%https://doi.org/10.1016/j.jctb.2012.09.003
%

\bibitem{LoSze}
{\sc Lov\'asz, L. and Szegedy, B.} (2006).
Limits of dense graph sequences.
{\em Journal of Combinatorial Theory, Series B},
{\bf 96/6,} 933--957.
%ISSN 0095-8956,
%https://doi.org/10.1016/j.jctb.2006.05.002.

\end{thebibliography}
\end{document}